\renewcommand\theenumi{\roman{enumi}}
\theoremstyle{plain}
\newtheorem{thm}{Theorem}[section]
\newtheorem{corol}[thm]{Corollary}
\newtheorem{lemma}[thm]{Lemma}
\newtheorem{prop}[thm]{Proposition}
\newtheorem{remark}[thm]{Remark}
\theoremstyle{definition}
\newtheorem{defn}[thm]{Definition}
\newcommand{\n}{\|}
\newcommand{\len}{\left\|}
\newcommand{\pn}{\right\|}
\newcommand{\la}{\left\langle}
\newcommand{\ra}{\right\rangle}
\newcommand{\lee}{\left(}
\newcommand{\p}{\right)}
\newcommand{\vectwo}[2]{\left[\begin{array}{c} #1 \\ #2 \end{array}\right]}
\newcommand{\inv}[1]{\frac{1}{ #1 }}
\newcommand{\maxsym}{\vee}
\newcommand{\minsym}{\wedge}
\newcommand{\E}{\mathbb{E}}
\newcommand{\R}{\mathbb{R}}
\renewcommand{\P}{\mathbb{P}}
\newcommand{\F}{\mathcal{F}}
\newcommand{\eqtwo}[4]{\left\{\begin{array}{ll} #1 & #2  \\ #3 & #4 \end{array}\right.}
\newcommand{\Ep}[1]{\mathcal{E}_{#1}}
\newcommand{\Ca}[1]{\mathcal{#1}}
\newcommand{\Bo}[1]{\mathbb{#1}}
\newcommand{\norm}[1]{\left\|#1\right\|}
\newcommand{\vecfour}[4]{\left[\begin{array}{ll} #1 & #2  \\ #3 & #4 \end{array}\right]}
\begin{document}

\author{Mariusz G\'orajski}

\title[Vector-valued stochastic delay equations]{Vector-valued stochastic delay equations - a weak solution and its Markovian representation}

\address{Mariusz G\'orajski, University of  \L \'od\'z, Rewolucji 1905 r. No. 41, 90-214 \L \'od\'z, Poland
\url{mariuszg@math.uni.lodz.pl}}

\begin{abstract}
 A class of stochastic delay equations in Banach space $E$ driven by cylindrical Wiener process is studied.
We investigate two concepts of solutions: weak and generalised strong, and give conditions under which they are equivalent. We present an evolution equation approach in a Banach space $\Ep{p}:=E\times L^p(-1,0;E)$ proving that the solutions can be reformulated as $\Ep{p}$-valued Markov processes. Based on the Markovian representation we prove the existence and continuity of the solutions. The results are applied to stochastic delay partial differential equations with an application to neutral networks and population dynamics. 
\end{abstract}

\keywords{
Stochastic partial differential equations with finite delay, Stochastic evolution equation, UMD Banach spaces, Type 2 Banach spaces}

\maketitle
AMS 2000 subject classification: 34K50, 60H15, 60H30, 47D06
\section{Introduction} 
Let $H$ be a separable Hilbert space. For a Banach space $E$ and $p\geq 1$ define $\Ep{p}:=E\times L^ p(-1,0;E)$.  Let $W_H$ be an $H$-cylindrical Wiener process on a given probability space $(\Omega, (\F_t)_{t\geq 0},\F,\P)$. We shall consider stochastic delay equation in a Banach space $E$ of the form:
\begin{align}\label{SDE}
\left\{\begin{array}{l} 
dX(t)= \left(BX(t) + \Phi X_t +\phi(X(t),X_t)\right)dt +\psi(X(t),X_t)dW_H(t),\quad t>0;\\
X(0)=x_0;\\
X_0=f_0, 
\end{array}\right. \tag{SDE}
\end{align}
for initial conditions $[x_0, f_0]\in L^0((\Omega,\Ca{F}_0);\Ep{p})$,
where  $(X_t)_{t\geq 0}$ is a segment process formed from  $(X(t))_{t\geq 0}$ in the following way:
\begin{align*}
X_t(s):= X(t+s), \quad s\in [-1,0].
\end{align*}
Let us consider \eqref{SDE} with the following hypotheses:
\renewcommand\theenumi {\arabic{enumi}}
\renewcommand\theenumii {\alph{enumii}}
\renewcommand\labelenumi{(H\theenumi)}
\labelformat{enumi}{(H#1)}

\begin{enumerate}
\item \label{h1}  $B:D(A)\subset E\rightarrow E$ is a linear operator and generates a $C_0$-semigroup $(S(t))_{t\geq 0}$ on $E$,
 
\item \label{h2}  $\Phi$ 	is given by
Riemann-Stieltjes integral $$\Phi f=\int_{-1}^{0}d\eta f \textrm{ dla } f\in C([-1;0];E),$$ where $\eta:[-1,0]\to \Ca{L}(E)$ is of bounded variation,

\item \label{h3} $\phi:D(\phi)\subset \Ep{p}\to E$ is densely defined mapping and there exists $a\in L^{p}_{loc}(0,\infty)$ such that for all $t>0$ and $\mathcal{X}, \mathcal{Y} \in D(\phi)$,   
\begin{align*}
&\norm{S(t)\phi(\mathcal{X})}_E\leq a(t)(1+\|\mathcal{X}\|_{\Ep{p}}), \\
 &\norm{S(t)(\phi(\mathcal{X})-\phi(\mathcal{Y})}_E\leq a(t)\|\mathcal{X}-\mathcal{Y}\|_{\Ep{p}},
\end{align*} 

\item \label{h4} $\psi:D(\psi)\subset\Ep{p}\to \Ca{L}(H,E)$ is densely defined mapping such that for all $t>0$ and $\mathcal{X},  \mathcal{Y} \in D(\psi)$, $S(t)\psi(\mathcal{X})$ belongs to $\gamma(H,E)$ and there exists $b\in L^{p\maxsym 2}_{loc}(0,\infty)$ such that 
\begin{align*}
&\norm{S(t)\psi(\mathcal{X})}_{\gamma(H,E)}\leq b(t)(1+\|\mathcal{X}\|_{\Ep{p}}),\\
&\norm{S(t)(\psi(\mathcal{X})-\psi(\mathcal{Y})}_{\gamma(H,E)}\leq b(t)\|\mathcal{X}-\mathcal{Y}\|_{\Ep{p}},
\end{align*} 
\end{enumerate}
where $\gamma(H,E)$ is the space of $\gamma$-radonifying operators from $H$ to $E$ (see Section 2 in \cite{coxgorajski} or \cite{vanNeervenVeraarWeis}). \par

We use the evolution equation approach to the delay equation as given in the monograph of Batkai and Piazzera \cite{Batkai2005}. Thus, 
one can define a closed operator $\mathcal{A}$ on $\Ep{p}$ by 
\begin{align}\label{opA}
D(\mathcal{A})&=\{[x,f]'\in D(B)\times W^{1,p}(-1,0;E)\,:\, f(0)=x\}; \notag \\
\mathcal{A}&=\left[\begin{array}{cc} B & \Phi \\ 0 & \frac{d}{d\theta} \end{array}\right].
\end{align}
Under assumptions \ref{h1}-\ref{h2} $\mathcal{A}$ generates a $C_0$-semigroup $(\Ca{T}(t))_{t\geq 0}$  on $\mathcal{E}_p$ (see \cite{Batkai2005}, Theorem 3.29). Hence we can consider the following stochastic Cauchy problem corresponding to \eqref{SDE}:
\begin{align}\label{SDCP}
\left\{ \begin{array}{l} dY(t)=\mathcal{A}Y(t)dt +F(Y(t))dt+ G(Y(t))dW_H(t),\quad t\geq 0;\\
Y(0)=[x_0,f_0]', \end{array}\right. \tag{SDCP}
\end{align}
where 
\begin{align}\label{FG}
F(Y(t)):=[\phi(Y(t)),0]',\quad G(Y(t)):=[\psi(Y(t)),0]'.
\end{align}\par
Recall the classical result \cite{Chojnowska-Michalik1978}, where equivalence of solutions to the
stochastic delay equation and the corresponding abstract Cauchy problem has been
shown by Chojnowska-Michalik for $p = 2$
and $E$ finite-dimensional. For a general class of spaces including the $\Ep{p}$ spaces
the variation of constants formula for finite-dimensional delay equations with
additive noise and a bounded delay operator is discussed in Riedle \cite{Riedle2008}. For more references  see \cite{coxgorajski}.
We complement and extend result from \cite{coxgorajski} concerning existence and uniqueness of solution to \eqref{SDE} by adding non-linear part and introducing weak concept of solution to \eqref{SDE}. The line of thought we take is to prove existence and continuity of a weak solution to the stochastic Cauchy problem \eqref{SDCP} and then 
using the equivalence between weak solutions to \eqref{SDCP} and \eqref{SDE} we obtain corresponding results for the stochastic delay equation \eqref{SDE}.\par
A large class of stochastic partial differential equations (stochastic PDEs) with delay can be rewritten as  stochastic ordinary equations with delay \eqref{SDE} in infinite dimensional space $E$. Moreover, PDEs with delay are used in modelling phenomena inter alia in bioscience (see \cite{baker}, \cite{Bocharov2000183} and \cite{kot2001elements}) or in neural networks (see \cite{Lv20081590}). For some stochastic PDEs with delay e.g. stochastic delay reaction-diffusion equations with non-linearities given by the Niemycki operator (see Section 3.3 Examples) a generalized strong solution may not exist, whereas one can prove the existence and uniqueness of weak solution.

In the Da Prato and Zabczyk monograph \cite{DaPratoZabczyk} an extensive treatment of the stochastic Cauchy problem in Hilbert spaces is given. In the Banach space framework  stochastic Cauchy problem has been considered by Brze\'zniak \cite{Brzez95} and Van Neerven, Veraar and Weis \cite{vanNeervenVeraarWeis_SEEinUMD}. They both consider the case that $\mathcal{A}$ generates an analytic semigroup. 
\par
Following the semigroup approach let us consider the following variation of constants formula:
\begin{align}\label{voc}
Y(t) = \mathcal{T}(t)Y(0) + \int_{0}^{t}\mathcal{T}(t-s)F(Y(s))ds+\int_{0}^{t}\mathcal{T}(t-s)G(Y(s))dW_H(s),
\end{align}
where the precise definition of the stochastic integral above and the relevant theory on vector-valued stochastic integrals can be found in \cite{vanNeervenVeraarWeis_SEEinUMD}. A process satisfying \eqref{voc} is usually referred to as a \emph{mild solution}. The existence of a mild solution to \eqref{SDCP} is proved
by the Banach fixed-point theorem in Section \ref{s:SDE} (Theorem \ref{t:existSDCP}).

In Theorems \ref{t:Dvarcons} and \ref{t:Dvarcons2} we show that  a mild solution of \eqref{SDCP} is equivalent to a weak solution of \eqref{SDCP} and under some additional assumption they are equivalent to generalized strong solution of \eqref{SDCP}.  
Finally, using these theorems in Theorem \ref{t:rep} we state that weak solutions to \eqref{SDCP} and \eqref{SDE} are
equivalent. Combining all these results we obtain existence and continuity of weak solution of \eqref{SDE} (see Corollaries \ref{c:existSDE} and \ref{c:continuitySDE}).\par

The correspondence  between strong, weak and mild concept of solution to stochastic linear delay equations in Hilbert space has been considered by Liu using the properties of the Green operator in  
\cite{liu2008stochastic}.

The equivalence of solutions to \eqref{SDE} and to \eqref{SDCP} is useful
because the latter is a Markov process and can be studied in the framework of the stochastic abstract Cauchy problem; and one can answer questions concerning e.g.\
invariant measures of the solutions to \eqref{SDE} (see  \cite{chojnowska1995} and \cite{Bierkens2011} and reference therein), Feller property (see \cite{reissRiedleVanGaans_delaydiffeq}) and regularity of solutions (see Corollary \ref{c:continuitySDE} and \cite{vanNeervenVeraarWeis_SEEinUMD}). 

\section{The Stochastic Cauchy Problem}\label{s:SCP}
In the introduction we have mentioned that the stochastic delay equation \eqref{SDE} can be rewritten as a stochastic Cauchy problem. In this section we recall the result concerning different concept of solution to \eqref{SCP} form \cite{GorajskiSol}. Let $E$ be a Banach space  and $H$ be a separable Hilbert space, and let $A:D(A)\subset
E\rightarrow E$ be the generator of a $C_0$-semigroup $(T(t))_{t\geq0}$ on $E$.  The sun dual semigroup $(T^{\odot}(t))_{t\geq 0}$ defined as subspace semigroup by $T^{\odot}(t)=T^{*}(t)_{|E^{\odot}}$ defined on   $E^{\odot}=\overline{D(A^*)}$ is strongly continuous (see Section 2.6 in \cite{EngNag2000} and Chapter 1 in \cite{vanNeervenadjoint}). A generator $(A^{\odot},D(A^{\odot}))$ of the sun dual semigroup is given by $A^\odot=A^{*}_{|E^{\odot}}$ and $D(A^{\odot})=\{x^*\in D(A^*): A^*x^*\in E^{\odot}\}$.\par 
 Let $W_H$ be an $H$-cylindrical Brownian motion and following the monograph of Peszat and Zabczyk (see Section 9.2 and Remark 9.3 in \cite{PeszZab}) let
$F:D(F)\subset E\rightarrow E$ and $G:D(G)\subset E\rightarrow \Ca{L} (H,E)$ satisfy the following conditions: 

\renewcommand\theenumi {\Alph{enumi}}
\renewcommand\labelenumi{(H\theenumi)}
\labelformat{enumi}{(H#1)}

\begin{enumerate}
    \item \label{F} $D(F)$ is dense in $E$ and there exists $a\in L^1_{loc}(0,\infty)$ such that for all $t>0$ and $x,y\in D(F)$ we have  
\begin{align*}
&\|T(t)F(x)\|_E\leq a(t)(1+\|x\|_E),\\
&\|T(t)\left(F(x)-F(y)\right)\|_E\leq a(t)\|x-y\|_E,
\end{align*}
\item \label{G}  $D(G)$ is dense in $E$ and there exists $b\in L^2_{loc}(0,\infty)$ such that for all $t>0$ and $x,y\in D(G)$ we have  
\begin{align*}
&\|T(t)G(x)\|_{\gamma(H,E)}\leq b(t)(1+\|x\|_E),\\
&\|T(t)\left(G(x)-G(y)\right)\|_{\gamma(H,E)}\leq b(t)\|x-y\|_E.
\end{align*} 
\end{enumerate}
 Let us consider the following stochastic Cauchy problem in $E$:
\begin{align}\label{SCP}
\left\{ \begin{array}{rll} dY(t)&=AY(t)dt +F(Y(t))dt+ G(Y(t))dW_H(t), &t\geq 0;\\
Y(0)&=Y_0. \end{array}\right. \tag{SCP}
\end{align}

\renewcommand\theenumi {\roman{enumi}}
\renewcommand\labelenumi{(\theenumi) }

\begin{defn}\label{d:weaksolSCP}
An $H$-strongly measurable adapted process $Y$ is called a \emph{weak solution} to \eqref{SCP} if $Y$ is a.s. (almost surely)\ locally Bochner integrable and for
all $t> 0$ and all $x^*\in D(A^\odot)$:

\begin{enumerate}
\item $\la F(Y),x^*\ra$ is integrable on $[0,t]$  a.s.;
\item $G^*(Y)x^*$ is stochastically integrable on $[0,t]$;
\item  for almost all $\omega$ 
\begin{align*}
\langle  Y(t)-Y_0 ,x^*\rangle= \int_{0}^{t}\langle  Y(s),A^\odot x^* \rangle ds +\int_0^t\langle F(Y(s)), x^*\rangle ds +\int_{0}^{t} G^*(Y(s))x^* dW_H(s).
\end{align*}
\end{enumerate}
\end{defn}

In the next theorem we need stochastic integral for $\Ca{L}(H,E)$-valued process (for a definition and the following characterisation see \cite{vanNeervenVeraarWeis}). Recall that \textsc{umd} property stands for Unconditional Martigale Difference property and it says that all $L^p(\Omega;E)$-convergence, $E$-valued sequence of martingle difference are unconditionally convergent (see \cite{Garling_rmte} and \cite{vanNeervenVeraarWeis}). It turns out that for a Banach space with \textsc{umd} property we may characterise stochastic integrability in terms of $\gamma$-radonifying norm. More precisely a $H$-strongly measurable adapted process $\Psi:[0,t]\times\Omega\to \Ca{L}(H,E)$ is stochastically integrable with respect to cylindrical Wiener process $W_H$ if and only if  $\Psi$ represents $\gamma(L^2(0,t;H);E)$-valued random variable $R_{\Psi}$ given by 
\begin{align}\label{R}
\la R_{\Psi}f, x^*\ra=\int_0^t\la\Psi(s)f(s),x^*\ra ds \quad \textrm{a.s},
\end{align}
for every $f\in L^2(0,t;H)$ and for all $x^*\in E^*$. In this situation one has also the following Burkh\"older-Gundy-Davies type inequalities :
\begin{align}\label{BDG} \E\ \sup_{s\in[0,t]}\n\int_0^s\Psi(u)dW_H \n^p_E\eqsim_p\E\ \n R_\psi\n_{\gamma(L^2(0,t;H),E)}^p
\end{align}
for all $p>0$\footnote{For reals $A,B$ we use the notation $A\lesssim_p B$ to express the fact that there exists a constant $C>0$, depending on $p$, such that $A\leq CB$. We
write $A\eqsim_p B$ if $A\lesssim_p B\lesssim_p A$.}. To simplify terminology we say that process $\Psi$ is in $\gamma(L^2(0,t;H);E)$ a.s. iff $\Psi$ represents a random variable $R_\Phi$ given by \eqref{R}.\par
In \cite{Garling_rmte} Garling has characterised \textsc{umd} property in terms of two properties: \textsc{umd}$^-$ and \textsc{umd}$^+$. 
\begin{defn}\label{d:UMD-}
 A Banach space $E$ has \textsc{umd}$^-$ property, if for all $1<p<\infty$ there exists $\beta^-_p>0$ such that for all $E$-valued sequence of $L^p$-martingle difference $(d_n)_{n=1}^N$ and for all Rademacher sequence $(r_n)_{n=1}^N$ independent from $(d_n)_{n=1}^N$ we have the following inequality:
\begin{align}\tag{\textsc{umd}$^-$}\label{umd-}\E \ \norm{\sum_{n=1}^{N} d_n}_E^p\leq \beta_p^- \E \ \norm{\sum_{n=1}^{N} r_nd_n}_E^p. 
\end{align}
\end{defn}
 A Banach space $E$ has \textsc{umd}$^+$ property, if the reverse inequality to  \eqref{umd-} holds. Recall that class of \textsc{umd} Banach spaces is in class of reflexive spaces and includes  Hilbert spaces and $L^p$ spaces for $p\in(1,\infty)$. Moreover, class of \textsc{umd}$^-$ Banach spaces includes also non-reflexive $L^1$ spaces.\par
 To integrate processes with values in $L^1$ one needs a weakened notion of stochastic integral. In a Banach space $E$ with \textsc{umd}$^-$ property the following condition: $\Psi$ is in $\gamma(L^2(0,t;H),E)$ a.s. is sufficient for stochastic integrability of $\Psi$ (cf. \cite{vanNeervenVeraarWeis}  and \cite{vanNeervenVeraarWeis_SEEinUMD}).   
\begin{thm}[\cite{GorajskiSol}]\label{t:varcons}
Assume that $E$ has \textsc{umd}$^-$ property and conditions \ref{F} and \ref{G} are satisfied. Let $Y$ be an $E$-valued $H$-strongly measurable adapted process with almost all locally Bochner square integrable trajectories. If for all $t> 0$ the process:
\begin{align}\label{G1}
u\mapsto T(t-u)G(Y(u))
\end{align}
 is in $\gamma(L^2(0,t;H),E)$ a.s.
Then $Y$ is a weak solution to \eqref{SCP} if and only if $Y$ is a mild solution to \eqref{SCP} i.e. $Y$ satisfies, for all $t\geq 0$,
\begin{align}\label{voc2}
Y(t) &= T(t)Y_0 +\int_{0}^{t}T(t-s)F(Y(s))ds+\int_{0}^{t}T(t-s)G(Y(s))dW_H(s) \quad \textrm{a.s.}
\end{align}
\end{thm}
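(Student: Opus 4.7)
The plan is to establish each implication separately, leveraging the characterization of stochastic integrals in \textsc{umd}$^-$ spaces via $\gamma$-radonification together with the sun-dual semigroup calculus. Throughout one uses that for $x^* \in D(A^\odot)$ the orbit $s \mapsto T^\odot(s) x^*$ is $C^1$ with derivative $T^\odot(s) A^\odot x^*$, and that $D(A^\odot)$ is weak$^*$-dense in $E^*$, so that establishing the mild identity after pairing with every $x^* \in D(A^\odot)$ is enough to recover the mild formula in $E$.

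For the implication mild $\Rightarrow$ weak, first I would pair \eqref{voc2} with $x^* \in D(A^\odot)$. The deterministic term $\langle T(t) Y_0, x^*\rangle$ equals $\langle Y_0, x^*\rangle + \int_0^t \langle Y_0, T^\odot(s) A^\odot x^* \rangle\, ds$ by the fundamental theorem applied to $s \mapsto T^\odot(s) x^*$. For the Bochner drift term I use the identity $T^*(t-s) x^* = x^* + \int_s^t T^\odot(u-s) A^\odot x^*\, du$, substitute it into $\langle T(t-s) F(Y(s)), x^*\rangle$, and apply classical Fubini to obtain $\int_0^t \langle F(Y(s)), x^*\rangle\, ds + \int_0^t \langle \int_0^u T(u-s) F(Y(s))\, ds, A^\odot x^*\rangle\, du$. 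The stochastic integral term is treated analogously: the assumption $u \mapsto T(t-u) G(Y(u))\in \gamma(L^2(0,t;H),E)$ a.s.\ combined with the \textsc{umd}$^-$ characterization of the $\gamma$-radonifying stochastic integral justifies a stochastic Fubini theorem, reducing the pairing to $\int_0^t G^*(Y(s)) x^*\, dW_H(s) + \int_0^t (\int_0^u T(u-s) G(Y(s))\, dW_H(s))^\wedge(A^\odot x^*)\, du$. Recognizing the mild formula evaluated at intermediate times $u \in [0,t]$ inside the outer integrals and applying the hypothesis \eqref{voc2} once more collapses the expression into the weak identity of Definition~\ref{d:weaksolSCP}.

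For the implication weak $\Rightarrow$ mild, fix $t > 0$ and $x^* \in D(A^\odot)$, and consider the time-dependent test function $\varphi(s) := T^\odot(t-s) x^*$ for $s \in [0,t]$, which satisfies $\varphi'(s) = -T^\odot(t-s) A^\odot x^*$. The key is an It\^o-type product formula for $s \mapsto \langle Y(s), \varphi(s)\rangle$. I would prove it by first testing the weak equation against piecewise-constant approximations $\varphi_n(s) = \sum_k \mathbf{1}_{(s_{k-1}, s_k]}(s)\, T^\odot(t-s_k) x^*$ on a partition of $[0,t]$: on each subinterval the weak identity provides the required increment, and summing telescopes the $A^\odot$-contributions against the finite difference of $\varphi_n$. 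Passing to the limit uses uniform continuity of the orbit $s \mapsto T^\odot(s) x^*$ together with the Burkholder--Davis--Gundy inequality \eqref{BDG} to control the stochastic increments in the $\gamma$-norm. Evaluating at $s = t$ gives $\langle Y(t), x^*\rangle = \langle Y_0, T^\odot(t) x^*\rangle + \int_0^t \langle F(Y(s)), T^\odot(t-s) x^*\rangle\, ds + \int_0^t G^*(Y(s))\, T^\odot(t-s) x^*\, dW_H(s)$, which is the mild identity \eqref{voc2} tested against $x^*$; weak$^*$-density of $D(A^\odot)$ promotes this to equality in $E$.

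The main obstacle is the handling of the stochastic integral on both sides of the equivalence. Outside of full \textsc{umd} spaces one only has the one-sided sufficient condition that membership in $\gamma(L^2(0,t;H), E)$ a.s.\ implies stochastic integrability, so a stochastic Fubini theorem compatible with this weakened notion of integral must be used rather than the classical one. This is exactly why the extra hypothesis \eqref{G1} is imposed: without it the mild stochastic integral is not even well-defined. Transferring between the weak-form integrand $G^*(Y(s)) T^\odot(t-s) x^*$ and the mild-form integrand $T(t-s) G(Y(s))$ requires identifying the $\gamma$-radonification of the latter with the action of the former on $x^*$ pointwise in $s$, and then invoking \eqref{BDG} to justify the interchange of integration order in $L^p(\Omega)$. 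Once this technical point is controlled, the rest of the argument is a bookkeeping exercise combining deterministic and stochastic Fubini with the sun-dual calculus.
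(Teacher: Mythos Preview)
The paper does not actually prove Theorem~\ref{t:varcons}: it is quoted from the external reference \cite{GorajskiSol} and only the statement is reproduced, so there is no proof in the paper to compare your proposal against.

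That said, your outline is the standard argument one expects and is sound in its essentials. The mild $\Rightarrow$ weak direction via the identity $T^*(t-s)x^* = x^* + \int_s^t T^\odot(u-s)A^\odot x^*\,du$, Fubini/stochastic Fubini, and re-substitution of the mild formula at intermediate times is exactly the classical route (cf.\ Da Prato--Zabczyk in Hilbert spaces). The weak $\Rightarrow$ mild direction via the moving test functional $\varphi(s)=T^\odot(t-s)x^*$ and piecewise-constant approximation is likewise standard; note that $T^\odot(r)x^*\in D(A^\odot)$ for $x^*\in D(A^\odot)$ so each step function value is admissible in Definition~\ref{d:weaksolSCP}, and the limiting stochastic term involves only the $H$-valued integrand $s\mapsto G^*(Y(s))T^\odot(t-s)x^*$, for which ordinary (Hilbert-space) BDG suffices. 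Your closing step---passing from the scalar identity for all $x^*\in D(A^\odot)$ to the $E$-valued mild equation---uses that $D(A^\odot)$ is weak$^*$-dense (hence separating) in $E^*$ and that the right-hand side of \eqref{voc2} is already known to be an $E$-valued random variable thanks to hypothesis \eqref{G1} and the \textsc{umd}$^-$ sufficient condition; you state this correctly. The only place to be careful is the stochastic Fubini step in the \textsc{umd}$^-$ setting: you should invoke a version valid under the one-sided $\gamma$-radonifying hypothesis rather than the full \textsc{umd} characterisation, but since the integrability you need is precisely guaranteed by \eqref{G1} this is a matter of citing the right lemma rather than a genuine gap.
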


\begin{remark}\label{r:varcons}Fix $x^*\in D(A^*)$. Let $Y$ be a $E$-valued, strongly measurable adapted process with almost all locally Bochner square integrable trajectories. Then we have the following:
\begin{enumerate}
\item condition \ref{F} implies that  $E\ni x\mapsto \la F(x),x^*\ra \in \R{}$ is a Lipschitz function by Lemma 2.3 in \cite{GorajskiSol}. 
 
\item if \ref{G} holds then Lemma 2.3 in \cite{GorajskiSol} implies that the function $E\ni x\mapsto G^*(x)x^*\in H$ is Lipschitz-continuous. Hence the process  $G^*(Y)x^*$ is strongly measurable adapted with almost all locally square integrable trajectories. In particular, $G^*(Y)x^*$ is stochastically integrable on $[0,t]$ for all $t>0$.  
\item by \ref{F} and \ref{G} the functions $E\ni x\mapsto T(s)F(x)\in E$, $E\ni x\mapsto T(s)G(x)\in\gamma(H,E)$   are continuous. Hence processes $T(t-\cdot)F(Y(\cdot))$, $T(t-\cdot)G(Y(\cdot))$ are  adapted, strongly and $H$-strongly measurable, respectively, and $T(t-\cdot)F(Y(\cdot))$ has  trajectories locally Bochner square integrable a.s.  
\item since process in \eqref{G1} represent element from $\gamma(L^2(0,t;H),E)$ a.s. and $E$ has \textsc{umd}$^-$ property, stochastic integral in \eqref{voc2} is well defined.
\end{enumerate} 
\end{remark}
 A generalised strong solution to \eqref{SCP} is defined and its equivalence to a mild solution of \eqref{SCP} is proven in \cite{coxgorajski}. 
\begin{defn}\label{d:strongsolSCP}
A strongly measurable adapted process $Y$ is called a \emph{generalized strong solution} to \eqref{SCP} if $Y$ is, almost surely, locally Bochner integrable and for
all $t> 0$:
\begin{enumerate}
\item $\int_{0}^{t} Y(s)ds \in D(A)$ a.s.,
\item $F(Y)$ is Bochner integrable in $[0,t]$ a.s.,
\item $G(Y)$ is stochastically integrable on $[0,t]$,
\end{enumerate}
and
$$Y(t) - Y_0 = A \int_{0}^{t} Y(s)ds + \int_{0}^{t} F(Y(s)) ds+\int_{0}^{t} G(Y(s)) dW_H(s)\quad a.s.$$
\end{defn}
The equivalence of mild, weak and generalised strong solution to \eqref{SCP} has been established in \cite{GorajskiSol}. First we recall the hypotheses 
\renewcommand\theenumi {\Alph{enumi}}
\renewcommand\theenumii {\roman{enumii}}
\renewcommand\labelenumi{(H\theenumi') }
\labelformat{enumi}{(H#1')}
\begin{enumerate}
    \item \label{F'} Assume that \ref{F} is satisfied and for all $t>0$ and all $g\in L^1(0,t;E)$ the function  $F(g)$ is Bochner integrable on $[0,t]$. 
\end{enumerate}

If $F$ is a Lipschitz function then \ref{F'} is satisfied.
\renewcommand\theenumi {\roman{enumi}}
\renewcommand\labelenumi{(\theenumi)}

\begin{thm}[\cite{GorajskiSol}]\label{t:varcons2}
Assume that $E$ has \textsc{umd}$^-$ property and conditions \ref{F'} and \ref{G} are satisfied. Let $Y$ be an $E$-valued $H$-strongly measurable adapted process with locally Bochner square integrable trajectories a.s. If for all $t> 0$ the processes:
\begin{align}\label{G2}
u\mapsto G(Y(u)),\quad u\mapsto T(t-u)G(Y(u)), \quad u\mapsto \int_0^{t-u}T(s)G(Y(u,\omega))ds
\end{align}
 are in $\gamma(0,t;H,E)$ a.s.
Then the following condition are equivalent:
\begin{enumerate}
	\item $Y$ is a generalised strong solution of \eqref{SCP}.
    \item $Y$ is a weak solution of \eqref{SCP}.
	 	 \item $Y$ is a mild solution of \eqref{SCP}.
\end{enumerate}  
\end{thm}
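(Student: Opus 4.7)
My plan is to leverage Theorem \ref{t:varcons}, which already establishes (ii)$\Leftrightarrow$(iii), and close the loop by proving (i)$\Rightarrow$(ii) and (iii)$\Rightarrow$(i). This brings the generalised strong notion into the existing weak/mild circle without rebuilding it from scratch.

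For (i)$\Rightarrow$(ii) I would simply pair the generalised strong solution identity with an arbitrary $x^*\in D(A^\odot)\subset D(A^*)$. The crucial step is the interchange
\begin{align*}
\la A\!\int_0^t Y(s)\,ds,\, x^*\ra = \int_0^t \la Y(s),\, A^\odot x^*\ra\,ds,
\end{align*}
which follows from the defining property of the Bochner integral together with closedness of $A$. The remaining terms are handled by the standard commutation of $\la\cdot,x^*\ra$ with Bochner and stochastic integrals, while Remark \ref{r:varcons}(ii) secures the integrability clauses of Definition \ref{d:weaksolSCP}.

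The bulk of the work is (iii)$\Rightarrow$(i). Starting from the mild formula \eqref{voc2}, I would integrate both sides over $[0,t]$ and swap the orders of integration. A deterministic Fubini converts the drift double integral into $\int_0^t\int_0^{t-u} T(r)F(Y(u))\,dr\,du$, and a stochastic Fubini does the same for the noise term, producing $\int_0^t\int_0^{t-u} T(r)G(Y(u))\,dr\,dW_H(u)$. Each pointwise inner integral $\int_0^{t-u} T(r)x\,dr$ lies in $D(A)$ with image $T(t-u)x - x$, so pulling $A$ inside both the Bochner integral in $u$ and the stochastic integral in $u$ via closedness yields $A\int_0^t Y(s)\,ds = Y(t) - Y_0 - \int_0^t F(Y(u))\,du - \int_0^t G(Y(u))\,dW_H(u)$, which is exactly the generalised strong identity. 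The integrability requirements of Definition \ref{d:strongsolSCP} are supplied by \ref{F'} (Bochner integrability of $F(Y)$, since $Y\in L^1_{\mathrm{loc}}$ a.s.) and by the first process in \eqref{G2} combined with the \textsc{umd}$^-$ criterion for stochastic integrability of $G(Y)$.

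The main obstacle is the stochastic Fubini: in a non-Hilbert \textsc{umd}$^-$ space the interchange of a Bochner integral in $s$ with a stochastic integral in $u$ is not automatic and requires $\gamma$-radonifying control on the Stieltjes-type process $u\mapsto \int_0^{t-u} T(s) G(Y(u))\,ds$, which is precisely the third hypothesis in \eqref{G2}; the other two processes in \eqref{G2} cover stochastic integrability of $G(Y)$ and of $T(t-\cdot)G(Y(\cdot))$. Pulling the closed operator $A$ inside the stochastic integral also needs a separate argument, which I would settle by approximating $\int_0^t Y(s)\,ds$ via Riemann sums in $s$ and invoking closedness of $A$ together with $L^0$-convergence of the corresponding random images.
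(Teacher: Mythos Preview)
The paper does not actually prove Theorem \ref{t:varcons2}; it is quoted verbatim from \cite{GorajskiSol} and no argument is given here. Your outline is the standard route one expects that reference to take: use Theorem \ref{t:varcons} for (ii)$\Leftrightarrow$(iii), obtain (i)$\Rightarrow$(ii) by pairing the strong identity with $x^*\in D(A^\odot)$, and close (iii)$\Rightarrow$(i) by integrating the mild formula in $s$, applying deterministic and stochastic Fubini, and using $A\int_0^{t-u}T(r)x\,dr = T(t-u)x - x$ componentwise. The three $\gamma$-hypotheses in \eqref{G2} are tailored exactly to make the stochastic integrability of $G(Y)$, of $T(t-\cdot)G(Y(\cdot))$, and the stochastic Fubini step legitimate in a \textsc{umd}$^-$ space, and \ref{F'} is what upgrades \ref{F} so that $F(Y)$ itself, not just $T(\cdot)F(Y)$, is Bochner integrable. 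So the architecture of your proof matches what the cited result requires.

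One minor point: your Riemann-sum justification for pulling $A$ through the stochastic integral is workable but roundabout. The cleaner argument is the standard lemma that a closed operator commutes with the stochastic integral whenever both $\Psi$ and $A\Psi$ represent elements of $\gamma(L^2(0,t;H),E)$ a.s.; one proves it by testing against $x^*\in D(A^*)$, using that $\la A\cdot,x^*\ra = \la\cdot,A^*x^*\ra$ on $D(A)$, and invoking closedness of $A$ on the resulting a.s.\ identity. Here $\Psi(u)=\int_0^{t-u}T(s)G(Y(u))\,ds$ and $A\Psi(u)=T(t-u)G(Y(u))-G(Y(u))$, and the second and first processes in \eqref{G2} give exactly the required $\gamma$-bounds. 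Also, in your (i)$\Rightarrow$(ii) step the ``closedness of $A$'' is not really what is used: once $\int_0^t Y(s)\,ds\in D(A)$ is granted by Definition \ref{d:strongsolSCP}, the interchange is just $\la Az,x^*\ra=\la z,A^*x^*\ra$ followed by the Bochner integral commuting with the bounded functional $\la\cdot,A^*x^*\ra$.
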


\section{The Stochastic Delay Equation}\label{s:SDE}

\subsection{The variation of constants formula}
We now turn to the stochastic delay equation \eqref{SDE} as presented in the introduction and to the related stochastic Cauchy problem \eqref{SDCP} on page \pageref{SDCP}. 

We assume \ref{h1}-\ref{h2}. Then, the operator $\mathcal{A}$ (cf. \eqref{opA}) generates the strongly continuous semigroup $(\mathcal{T}(t))_{t\geq 0}$ on $\Ep{p}$ with a $L^p$-norm given by $\len [x,f]\pn_{\Ep{p}}=\lee\len x\pn^p_E+\len f\pn^p_{L^p(-1,0;E)}\p^\inv{p}$(see Theorem 3.29 in \cite{Batkai2005}). We shall define the projections $\pi_1:\Ep{p}\rightarrow E$ and $\pi_2 : \Ep{p} \rightarrow L^p(-1,0;E) $ as follows: $\pi_1 \left[x,f\right]'=x$; $\pi_2 \left[x,f\right]'=f$.

The following property of $(\mathcal{T}(t))_{t\geq 0}$ is intuitively obvious and useful in the following:
\begin{align}\label{propT}
\left(\pi_2 \mathcal{T}(t)\left[\begin{array}{c} x \\ f \end{array}\right]\right)(u) &= \pi_1 \mathcal{T}(t+u)\left[\begin{array}{c} x \\ f \end{array}\right],
\end{align}
for $[x,f]'\in \Ep{p}, u\in[-1,0], t>-u$ (for a proof see \cite{Batkai2005}, Proposition 3.11). \par

\begin{lemma}\label{l:eqSolSCPD}
Assume that \ref{h1} and \ref{h3}-\ref{h4} hold.
Then for $F:\Ep{p}\to \Ep{p}$ and $G:\Ep{p}\to\Ca{L}(H,\Ep{p})$ given by \eqref{FG} there exist 
$\tilde{a}\in L_{loc}^p(0,\infty)$, $\tilde{b}\in L_{loc}^{2\maxsym p}(0,\infty)$ such that for all $t>0$ we have
\begin{enumerate}
\item if \ref{h2} holds, then
\begin{align}\label{r:DFa}
&\norm{\pi_1\Ca{T}(t)F(\Ca{X})}_{E}\leq \tilde{a}(t)(1+\norm{\Ca{X}}_{\Ep{p}}), \\
&\norm{\pi_1\Ca{T}(t)(F(\Ca{X})-F(\Ca{Y}))}_{E}\leq \tilde{a}(t)\norm{\Ca{X}-\Ca{Y}}_{\Ep{p}} \label{r:DF}
\end{align}
for all $\Ca{X},\Ca{Y}\in D(\phi)$, and 
\begin{align} 
\label{r:DGa}
&\norm{\pi_1\Ca{T}(t)G(\Ca{X})}_{\Ca{L}(H,E)}\leq \tilde{b}(t)(1+\norm{\Ca{X}}_{\Ep{p}}), \\
&\norm{\pi_1\Ca{T}(t)(G(\Ca{X})-G(\Ca{Y}))}_{\Ca{L}(H,E)}\leq \tilde{b}(t)\norm{\Ca{X}-\Ca{Y}}_{\Ep{p}},
\label{r:DG}
 \end{align}
for all $\Ca{X},\Ca{Y}\in D(\psi)$.
\item if \ref{h2} holds, $E$ is a Hilbert space, then one can replace the $\Ca{L}(H,E)$-norm  in \eqref{r:DGa} and \eqref{r:DG} with the $\gamma(H,E)$-norm \footnote{If $H$ and $E$ are Hilbert spaces then we have $\gamma(H,E)=\Ca{L}_2(H,E)$, where $\Ca{L}_2(H,E)$ is a space of Hilbert-Schmidt operators.}. 
\item if the delay operator is bounded i.e. $\Phi\in\Ca{L}(L^p(-1,0;E),E)$, then one can replace the $\Ca{L}(H,E)$-norm  in \eqref{r:DGa} and \eqref{r:DG} with the $\gamma(H,E)$-norm.
\end{enumerate}
\end{lemma}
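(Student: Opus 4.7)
The plan is to combine the variation-of-constants formula for the delay semigroup $\Ca{T}$ with a Young-convolution estimate.  The standard semigroup-perturbation decomposition of $\Ca{T}$ (cf.\ \cite{Batkai2005}, \S3), together with the identity $\pi_1\Ca{T}_0(t)[x,f]'=S(t)x$ for the undelayed reference semigroup $\Ca{T}_0$, gives
\begin{align*}
\pi_1 \Ca{T}(t)[x,0]' = S(t)x+\int_0^t S(t-s)\,\Phi\,\pi_2\Ca{T}(s)[x,0]'\,ds,
\end{align*}
and by \eqref{propT}, setting $y(t):=\pi_1\Ca{T}(t)[x,0]'$, the integrand satisfies $\pi_2\Ca{T}(s)[x,0]'(u)=y(s+u)$ for $s+u>0$ and $=0$ otherwise.

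To prove \eqref{r:DFa}, set $x=\phi(\Ca{X})$.  The first term is bounded by $a(t)(1+\n\Ca{X}\n_{\Ep{p}})$ via \ref{h3}.  For the convolution term, Fubini plus the change $v=s+u$ rewrites it as $\int_{-1}^0 d\eta(u)\,(S*y)(t+u)$, where $*$ denotes Bochner convolution with both factors extended by zero on the negative half-line.  Young's inequality $\n S*y\n_{L^p}\leq \n S\n_{L^1}\n y\n_{L^p}$ combined with Minkowski for $d|\eta|$ gives
\begin{align*}
\n y\n_{L^p(0,T;E)}\leq \n S(\cdot)\phi(\Ca{X})\n_{L^p(0,T;E)}+\n\eta\n_{BV}\,\n S\n_{L^1(0,T)}\,\n y\n_{L^p(0,T;E)}.
\end{align*}
Choosing $T_0$ with $\n\eta\n_{BV}\n S\n_{L^1(0,T_0)}\leq \tfrac12$ absorbs the $y$-term, and a standard iteration on intervals of length $T_0$ extends the bound to arbitrary compact $T$.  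Hölder then gives the pointwise estimate $\n(S*y)(r)\n_E\leq \n S\n_{L^{p'}(0,r;\Ca{L}(E))}\n y\n_{L^p(0,r;E)}$, which plugged back into the VOC yields
\begin{align*}
\n \pi_1\Ca{T}(t)F(\Ca{X})\n_E\leq \bigl[a(t)+C(t)\n a\n_{L^p(0,t)}\bigr](1+\n\Ca{X}\n_{\Ep{p}}),
\end{align*}
for a locally bounded $C(\cdot)$.  Taking $\tilde a(t)$ equal to the bracketed expression gives $\tilde a\in L^p_{loc}$, since $a\in L^p_{loc}$ and $t\mapsto \n a\n_{L^p(0,t)}$ is continuous.

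The Lipschitz bound \eqref{r:DF} repeats the argument with $\phi(\Ca{X})-\phi(\Ca{Y})$ in place of $\phi(\Ca{X})$, using linearity of $\pi_1\Ca{T}(t)[\,\cdot\,,0]'$ in the first argument and the Lipschitz clause of \ref{h3}.  The statements \eqref{r:DGa}, \eqref{r:DG} of part (i) follow by running the same template at each $h\in H$ and taking the $\Ca{L}(H,E)$-norm supremum over $\n h\n\leq 1$, using \ref{h4}; the integrability class $L^{p\maxsym 2}_{loc}$ of $b$ propagates to $\tilde b$ verbatim.  For parts (ii) and (iii) only the convolution term needs upgrading from $\Ca{L}(H,E)$ to $\gamma(H,E)$, as the free term $S(t)\psi(\Ca{X})$ already lies in $\gamma(H,E)$ by \ref{h4}: in (ii), $\gamma(H,E)=\Ca{L}_2(H,E)$ is Hilbertian, so the Bochner--Minkowski triangle inequality preserves the Hilbert--Schmidt norm when $\Phi$ is applied entrywise; in (iii), $\Phi\in\Ca{L}(L^p(-1,0;E),E)$ together with the $\gamma$-ideal property (composition with bounded operators preserves $\gamma$-radonification) keeps the convolution in $\gamma(H,E)$.

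The main obstacle is obtaining $\tilde a\in L^p_{loc}$ (resp.\ $\tilde b\in L^{p\maxsym 2}_{loc}$) rather than a weaker class.  A naive sup-Gronwall applied directly to the pointwise VOC produces a bound involving $\sup_{r\leq t}a(r)$, which fails $L^p_{loc}$-integrability whenever $a$ has a singularity at $0$.  The two-step strategy---first bound $\n y\n_{L^p(0,T;E)}$ via Young--Minkowski, then feed this back pointwise via Hölder---is designed precisely to circumvent this.
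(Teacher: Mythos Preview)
Your argument for part (i) is correct but takes a genuinely different route from the paper. You use the Duhamel form $\mathcal{T}(t)=\mathcal{T}_0(t)+\int_0^t\mathcal{T}_0(t-s)C\mathcal{T}(s)\,ds$, which produces an \emph{implicit} integral equation for $y(t)=\pi_1\mathcal{T}(t)[x,0]'$ and then forces the Young--contraction--iteration scheme you describe in order to extract $\|y\|_{L^p}$ before feeding it back pointwise. The paper instead uses the companion form $\mathcal{T}(t)=\mathcal{T}_0(t)+\int_0^t\mathcal{T}(t-s)C\mathcal{T}_0(s)\,ds$ (this is \eqref{vocf}). Since $\pi_2\mathcal{T}_0(s)[x,0]'=\mathcal{S}_sx$ is \emph{explicit}, one simply bounds $\|\pi_1\mathcal{T}(t-s)\|\leq M_{\mathcal{T}}(t)$ and computes $\int_0^t\|\Phi\mathcal{S}_sx\|_E\,ds\leq |\eta|(-1,0)\int_0^t\|S(s)x\|_E\,ds$ via Fubini (see \eqref{r:h82}). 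The result drops out in one line, $\tilde a(t)=a(t)+M_{\mathcal{T}}(t)|\eta|(-1,0)\int_0^t a(s)\,ds$, with no Gronwall, no small-interval absorption, and no iteration. Your ``main obstacle'' paragraph is thus solving a difficulty created by the choice of VOC; the other choice avoids it entirely.

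For part (iii) your sketch has a real gap. The ideal property reads $\|RS\|_\gamma\leq\|R\|_{\mathcal{L}}\|S\|_\gamma$, so to place your convolution term $\int_0^t S(t-s)\Phi\,\pi_2\mathcal{T}(s)[\Lambda\cdot,0]'\,ds$ in $\gamma(H,E)$ you need $\pi_2\mathcal{T}(s)[\Lambda\cdot,0]'\in\gamma(H,L^p(-1,0;E))$. But $\Lambda=\psi(\mathcal{X})-\psi(\mathcal{Y})$ is only assumed to satisfy $S(r)\Lambda\in\gamma(H,E)$, not $\Lambda\in\gamma(H,E)$, so you cannot apply the ideal property directly to the composite $\pi_2\mathcal{T}(s)\iota\Lambda$. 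What is actually needed is \eqref{propT} together with the $\gamma$-Fubini isomorphism, which identifies $\|\pi_2\mathcal{T}(s)[\Lambda\cdot,0]'\|_{\gamma(H,L^p)}$ with $\big(\int_{-s\vee-1}^0\|\pi_1\mathcal{T}(s+\theta)[\Lambda\cdot,0]'\|_{\gamma(H,E)}^p\,d\theta\big)^{1/p}$; this closes only after rerunning your Young/iteration argument at the level of $z(t):=\|\pi_1\mathcal{T}(t)[\Lambda\cdot,0]'\|_{\gamma(H,E)}$. This can be carried out, but your one-line appeal to the ideal property does not cover it. Here again the paper's explicit VOC is cleaner: the integrand contains $\mathcal{S}_s\Lambda$, whose $\gamma(H,L^p)$-norm is bounded directly by $\big(\int_0^s b(r)^p\,dr\big)^{1/p}$ via $\gamma$-Fubini and \ref{h4}, with no implicit inequality to close.
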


\begin{proof}
We only prove \eqref{r:DG}. The same proofs works for  \eqref{r:DFa}-\eqref{r:DGa}.\\
\textbf{(i).}  
The following formula defines a semigroup on $\Ep{p}$:  $\Ca{T}_0(t)=\vecfour{S(t)}{0}{\Ca{S}_t}{T_l(t)}$ for every $t\geq 0$, where  $\lee T_l(t)\p _{t\geq 0}$ is the left translation semigroup on $L^p(-1,0;E)$ and $\Ca{S}_s\in\Ca{L}(E,L^p(-1,0;E))$ is given by $$(\Ca{S}_sx)(\theta)=\eqtwo{0}{\theta\in(-1,-s\maxsym -1)}{S(\theta+s)x}{\theta\in(-s\maxsym-1,0)}.$$
for all $s\geq0$ and all $x\in E$ (cf. Theorem 3.25 in \cite{Batkai2005}). Let $\lee\Ca{A}_0=\vecfour{B}{0}{0}{\frac{d}{d\theta}},D(\Ca{A}_0)=D(\Ca{A})\p$ be the generator of $(\Ca{T}_0(t))_{t\geq0}$. Recall that the delay semigroup $\lee \Ca{T}(t)\p_{t\geq 0}$ can be built by the Miyadera-Voight perturbation theorem as a semigroup generated by  additive perturbation of generator $\Ca{A}_0$ of the form: $\Ca{A}=\Ca{A}_0+\vecfour{0}{\Phi}{0}{0}$ (cf. Theorem 1.37 in \cite{Batkai2005}). Moreover, we have   the variation of constant formula:
\begin{align}\label{vocf}
\Ca{T}(t)\Ca{X}=\Ca{T}_0(t)\Ca{X}+\int_0^t\Ca{T}(t-s)\vecfour{0}{\Phi}{0}{0}\Ca{T}_0(s)\Ca{X}ds, \quad \Ca{X}\in D(\Ca{A}).
\end{align}
Then, for all $t>0$ and for every $\Ca{X}=[x,g]^{'}\in D(\Ca{A})$ we have:
\begin{align}\label{r:piFUS_M-Voigta}
\n\pi_1\Ca{T}(t)\Ca{X}\n_{E} &\leq\len S(t)x\pn_{E}+\n\int_0^t\pi_1\Ca{T}(t-s)\vectwo{\Phi(\Ca{S}_sx+T_l(s)g)}{0}ds\n_{E}.
\end{align}
Since $\Phi$ is given by Riemann-Stieltjes integral (cf. \ref{h2}), one can apply the Fubini theorem and the H\"older inequality to 
\begin{align}\label{r:h82}
&\int_0^t\norm{ \Phi (\Ca{S}_sx+T_l(s)g)}_Eds=\int_0^{1\minsym t}\norm{ \Phi (\Ca{S}_sx+T_l(s)g)}_Eds+\int_1^{t\maxsym 1}\norm{ \Phi (\Ca{S}_sx+T_l(s)g)}_Eds\\&\hspace{1.5cm}\leq
 |\eta|(-1,0)\lee\norm{g}_{L^p(-1,0;E)}+\int_0^{1\minsym t}\norm{ S(s)x}_Eds\p+\int_{-1}^0\int_{1}^{t\maxsym 1}\norm{ S(s+\theta)x}_E dsd|\eta|(\theta)\notag\\
&\hspace{1.5cm}\leq
 |\eta|(-1,0)\lee\norm{ g}_{L^p(-1,0;E)}+\int_0^t\norm{ S(s)x}_Eds\p.\notag
\end{align}
Thus
\begin{align*}
&\norm{\pi_1\int_0^t\Ca{T}(t-s)\vectwo{\Phi(\Ca{S}_sx+T_l(s)g)}{0}ds}_{E}\leq M_{\Ca{T}}(t)|\eta|(-1,0)\lee\norm{ g}_{L^p(-1,0;E)}+\int_0^t\norm{ S(s)x}_Eds\p, \notag 
\end{align*}
where $M_{\Ca{T}}(t)=\sup_{s\in[0,t]}\n\Ca{T}(u)\n_{\Ca{L}(\Ep{p})}$.
On substituting the above estimation into \eqref{r:piFUS_M-Voigta} we obtain
\begin{align}\label{r:pi1}
\norm{\pi_1\Ca{T}(t)\Ca{X}}_{\Ep{p}} \leq&\norm{S(t)x}_E+M_{\Ca{T}}(t)|\eta|(-1,0)\lee\n g\n_{L^p(-1,0;E)}+\int_0^t\n S(s)x\n_Eds\p,
\end{align}
for all $\Ca{X}=[x,g]^{'}\in D(\Ca{A})$. Since  $D(\Ca{A})$ is dense in~$\Ep{p}$  and $\Ca{T}(t)$ is bounded, \eqref{r:pi1} holds for all $\Ca{X}\in\Ep{p}$. In particular, by second inequality in  \ref{h4} and the inequality $\n\cdot\n_{\Ca{L}(H,E)}\leq\n\cdot\n_{\gamma(H,E)}$ one has that, for all $\Ca{X},\Ca{Y}\in \Ep{p}$,   
\begin{align}\notag
\norm{\pi_1\Ca{T}(t)(G(\Ca{X})-G(\Ca{Y}))}&_{\Ca{L}(H,E)} \leq\norm{S(t)\lee\psi(\Ca{X})-\psi(\Ca{Y})\p}_{\Ca{L}(H,E)}\\&\quad+M_{\Ca{T}}(t)|\eta|(-1,0)\int_0^t\norm{ S(s)\lee\psi(\Ca{X})-\psi(\Ca{Y})\p}_{\Ca{L}(H,E)}ds\label{r:o}\\ \notag
&\leq\lee b(t)+M_{\Ca{T}}(u)|\eta|(-1,0)\int_0^tb(s)ds\p\n\Ca{X}-\Ca{Y}\n_{\Ep{p}}.
\end{align}
Let us notice that the function $\tilde{b}$ defined as $\tilde{b}(t):=b(t)+M_{\Ca{T}}(t)|\eta|(-1,0)\int_0^tb(s)ds$ for a.e. (almost every) $t\geq 0$ belongs to $L^{2}_{loc}(0,\infty)$.
The proof of \eqref{r:DFa}-\eqref{r:DF} follows between the same lines with $\tilde{a}(t)=a(t)+M_{\Ca{T}}(t)|\eta|(-1,0)\int_0^ta(s)ds$ for a.e. $t\geq 0$.\par

\textbf{(ii).} Assume now that $E$ is a Hilbert space. Let $(h_n)_{n\geq 1}$ be an orthonormal system in~$H$. By \eqref{r:pi1} and \ref{h4} we obtain, for all $\Ca{X},\Ca{Y}\in \Ep{p}$,
\begin{align*}
\n\pi_1\Ca{T}(t)(G(\Ca{X})-G(\Ca{Y}))&\n_{\Ca{L}_2(H,E)}=\lee\sum_{n=1}^\infty\norm{\pi_1\Ca{T}(t)(G(\Ca{X})-G(\Ca{Y})h_n}^2_{E}\p^\inv{p}\\&\leq 
\lee\sum_{n=1}^\infty \norm{S(t)(\psi(\Ca{X})-\psi(\Ca{Y}))h_n}^2_{E}\p^\inv{2}\\&\quad+M_{\Ca{T}}(t)|\eta|(-1,0)\lee\sum_{n=1}^\infty\lee\int_0^t\norm{ S(s)(\psi(\Ca{X})-\psi(\Ca{Y}))h_n}_{E}ds\p^2\p^\inv{2}\\
&\leq \len S(t)(\psi(\Ca{X})-\psi(\Ca{Y}))\pn_{\Ca{L}_2(H,E)}\\&+
M_{\Ca{T}}(t)|\eta|(-1,0)\int_0^t\norm{ S(s)(\psi(\Ca{X})-\psi(\Ca{Y}))}_{\Ca{L}_2(H,E)}ds \\
&\leq \lee b(t)+M_{\Ca{T}}(t)|\eta|(-1,0)\int_0^t b(s)ds \p \norm{\Ca{X}-\Ca{Y}}_{\Ep{2}},
\end{align*}
where in the second inequality we use the Minkowski integral inequality.\par

\textbf{(iii).} Assume that $\Phi\in\Ca{L}(L^p(-1,0;E),E)$. First, let us observe that if $\Phi\in\Ca{L}(L^p(-1,0;E),E)$ and \ref{h1} hold, then  $\Ca{A}$, defined by \eqref{opA} generates strongly continuous semigroup on $\Ep{p}$ (cf. \cite{Batkai2005} ).  For any  $\Ca{X,Y}\in \Ep{p}$  let $\Lambda=\psi(\Ca{X})-\psi(\Ca{Y})$ denote an operator from $\Ca{L}(H,E)$.  Then using \eqref{vocf} we obtain
that for all $t>0$ \begin{align}\label{1ii}
\norm{\pi_1\Ca{T}(t)(G(\Ca{X})-G(\Ca{Y}))}_{\gamma(H,E)}\leq \norm{ S(t)\Lambda}_{\gamma(H,E)}+\norm{\pi_1\int_0^t\Ca{T}(t-s)[\Phi\Ca{S}_s\Lambda,0]'ds}_{\gamma(H,E)}. 
\end{align} 
By boundedness of $\Phi$ and due to the ideal property of the $\gamma$-radonifying operators and therefore by $\gamma$-Fubini isomorphism (cf. Proposition 2.6 in \cite{vanNeervenVeraarWeis}) between the spaces: $L^p(-1,0;\gamma(H,E))$ and $\gamma(H,L^p(-1,0;E))$  
 we can estimate the second term on right hand side of \eqref{1ii} as follows
\begin{align}\label{3ii}
&\len\pi_1\int_0^t\Ca{T}(t-s)[\Phi\Ca{S}_s\Lambda,0]'ds\pn_{\gamma(H,E)}\leq
\pi_1\int_0^t\norm{\Ca{T}(t-s)[\Phi\Ca{S}_s\Lambda,0]'}_{\gamma(H,E)}ds\\ &\leq
\int_0^t\norm{\Ca{T}(t-s)}_{\Ca{L}(\Ep{p})}\norm{\Phi}_{\Ca{L}(L^p(-1,0;E),E)}\norm{\Ca{S}_s\Lambda}_{\gamma(H,L^p(-1,0;E))}ds\notag\\ &\leq M_{\Ca{T}}(t)\norm{\Phi}_{\Ca{L}(L^p(-1,0;E),E)}\lee\int_{-s\maxsym-1}^0\norm{ S(s+\theta)\Lambda}_{\gamma(H,E)}^p d\theta\p^\inv{p}
\len\Ca{X}-\Ca{Y}\pn_{\Ep{p}}.\notag
\\ &\notag\leq M_{\Ca{T}}(t)\norm{\Phi}_{\Ca{L}(L^p(-1,0;E),E)}\int_0^t\lee\int_{0}^sb^p(r)dr\p^\inv{p}ds\n\Ca{X}-\Ca{Y}\n_{\Ep{p}},
\end{align} 
where in the last inequality we use assumption \ref{h4}. 
Finally, using  \eqref{1ii}-\eqref{3ii} we obtain
\begin{align*}
\norm{\Ca{T}(t)(G(\Ca{X})-G(\Ca{Y}))}_{\gamma(H,\Ep{p})}&\leq \Bigg( b(t)+\lee\int_0^tb^p(s)ds\p^\inv{p}\\&\qquad+M_{\Ca{T}}(t)|\eta|(-1,0)t\lee\int_0^tb^p(s)ds\p^\inv{p}\Bigg)\norm{\Ca{X}-\Ca{Y}}_{\Ep{p}}.
\end{align*} 
 for all $t> 0$.
\end{proof}

Now we shall consider \eqref{SDCP} in $\Ep{p}=E \times L^p(-1,0;E)$ where $E$ is a type 2 Banach space  with \textsc{umd}$^-$ property. Recall that a Banach space $E$ is said to have {\em type $p\in [1,2]$} if there
exists a constant $C\ge 0$ such that for all
finite choices $x_1,\dots, x_k\in F$ we have
$$ \Big(\E\ \Big\n \sum_{j=1}^k \gamma_j x_j \Big\n_E^2\Big)^\frac12 \le C
\Big(\sum_{j=1}^k \n x_j\n_E^p\Big)^\frac1p,$$
where $(\gamma_j)_{j\geq 1}$ is a sequence of independent standard Gaussians. We note that Hilbert spaces have type $2$ and $L^p$-spaces with $p\in [1,\infty)$ have type $\min\{p,2\}$. For more details we refer the reader to \cite{Albiac2006}. In the next theorem we will need the following embedding:
\begin{align}\label{emb}
L^2(0,t;\gamma(H,E))\hookrightarrow \gamma(L^2(0,t;H),E),
\end{align}
which holds for a type 2 Banach space $E$ (see p.\ 1460 in \cite{vanNeervenVeraarWeis}).
The first result concerning stochastic Cauchy problem for delay equation \eqref{SDCP} says that its weak solutions and  mild solutions are equivalent.
\begin{thm}\label{t:Dvarcons}
Let $E$ be a type $2$ \textsc{umd}$^-$ Banach space and let $p\in [1,\infty)$. Assume that \ref{h1} and \ref{h3}-\ref{h4} hold and that one of the following is satisfied:
\begin{enumerate}
\item[(a)]$\Phi\in\Ca{L}(L^p(-1,0;E),E)$;
\item[(b)] \ref{h2} holds and either $H$ has a finite dimension or $E$ is a Hilbert space.
\end{enumerate}
 Let us  consider \eqref{SDCP}; i.e.\ let $\mathcal{A}$ defined by \eqref{opA} be the generator of the
$C_0$-semigroup $(\mathcal{T}(t))_{t\geq 0}$ on $\mathcal{E}_p=E\times L^p(-1,0;E)$. Let $F:\Ep{p}\to \Ep{p}$ and $G:\Ep{p}\to\Ca{L}(H,\Ep{p})$ be given by \eqref{FG}. 
Let $Y:[0,\infty)\times \Omega \rightarrow \mathcal{E}_p$ be a strongly measurable, adapted process satisfying \footnote{From now on we  denote by $\Bo{SL}_{\Ca{F}}^q(0,t;\Ep{p})$ for some $t>0$ and $q\geq 1$ a Banach space of strongly measurable, adapted process $Y$ with the norm 
$\n Y \n_{\Bo{SL}_{\Ca{F}}^q(0,t;\Ep{p})}=\sup_{s\in[0,t]}\lee\E\ \n Y(s)\n_{\Ep{p}}^{q}\p^{q}$.}
\begin{align}\label{intY}
\sup_{s\in[0,t]}\E\ \n Y(s)\n_{\mathcal{E}_p}^{2} < \infty \quad \textrm{ for all } t>0.
\end{align}
Then  $Y$ is a weak solution to \eqref{SDCP} if and only if $Y$ is a solution to: 
\begin{align}\label{Dvarcon}
Y(t) &= \mathcal{T}(t)\vectwo{x_0}{f_0} + \int_{0}^{t} \mathcal{T}(t-s)F(Y(s))ds+\int_{0}^{t} \mathcal{T}(t-s)G(Y(s))dW_H(s),
\end{align}
a.s. for all $t\geq 0$.
\end{thm}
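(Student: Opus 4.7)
My strategy is to reduce the statement to Theorem \ref{t:varcons} applied to the stochastic Cauchy problem \eqref{SDCP} on the state space $\Ep{p}$ with semigroup $(\Ca{T}(t))_{t\geq 0}$. Three ingredients must be verified: (i) $\Ep{p}$ carries the \textsc{umd}$^-$ property; (ii) the coefficients $F$ and $G$ defined in \eqref{FG} satisfy hypotheses \ref{F} and \ref{G} relative to $(\Ca{T}(t))_{t\geq 0}$ on $\Ep{p}$; (iii) for each $t>0$ the integrand $u\mapsto \Ca{T}(t-u)G(Y(u))$ represents an element of $\gamma(L^2(0,t;H),\Ep{p})$ almost surely. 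Once these are in place, Theorem \ref{t:varcons} yields the equivalence of weak and mild solutions to \eqref{SDCP}.

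Ingredient (i) follows because \textsc{umd}$^-$ is preserved under $L^p$-Bochner spaces for $p\in[1,\infty)$ and under finite direct sums, so $\Ep{p}=E\times L^p(-1,0;E)$ inherits it from $E$. For (ii), I combine Lemma \ref{l:eqSolSCPD} with the translation identity \eqref{propT}. Since $F(\Ca{X})=[\phi(\Ca{X}),0]'$ and $G(\Ca{X})=[\psi(\Ca{X}),0]'$ have zero history component, \eqref{propT} gives $(\pi_2\Ca{T}(t)F(\Ca{X}))(u)=\pi_1\Ca{T}(t+u)F(\Ca{X})$ when $t+u>0$ and zero otherwise, and similarly for $G$. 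Taking $L^p$-norms in $u$ reduces the full $\Ep{p}$-bound for $F$ to an $L^p$-integral of the first-component estimate from Lemma \ref{l:eqSolSCPD}(i), yielding a dominating function in $L^1_{loc}(0,\infty)$; the Lipschitz estimate is identical. For $G$: in case (a), the closing computation in the proof of Lemma \ref{l:eqSolSCPD}(iii) already produces the required $\gamma(H,\Ep{p})$-bound; in case (b), I use the $\gamma$-Fubini isomorphism between $L^p(-1,0;\gamma(H,E))$ and $\gamma(H,L^p(-1,0;E))$, available when $H$ is finite-dimensional (where $\gamma=\Ca{L}$ trivially) or when $E$ is Hilbert (reducing to $\Ca{L}_2$ identifications), to transport Lemma \ref{l:eqSolSCPD}(i) or (ii) into a $\gamma(H,\Ep{p})$-estimate with dominating function in $L^2_{loc}(0,\infty)$.

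For (iii), I exploit the product structure of $\Ep{p}$ and check membership componentwise. The first-component process $u\mapsto\pi_1\Ca{T}(t-u)G(Y(u))$ lies in $\gamma(L^2(0,t;H),E)$ a.s.\ by the pointwise $\gamma(H,E)$-bound from (ii), the square-integrability hypothesis \eqref{intY}, and the type 2 embedding \eqref{emb} applied to $E$. For the second-component process, $\gamma$-Fubini rewrites $\gamma(L^2(0,t;H),L^p(-1,0;E))$ in terms of an $L^p(-1,0;\cdot)$-valued $\gamma$-space, and the shift identity \eqref{propT} reduces the estimate to first-component membership already handled.

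The principal obstacle is the $\gamma$-norm analysis for $G$ in case (b) when $p<2$: then $\Ep{p}$ itself need not be type 2 and I cannot apply the embedding \eqref{emb} directly at the $\Ep{p}$-valued level. The workaround is to invoke the type 2 property of $E$ only on the first-component slice and push the conclusion through the product structure using $\gamma$-Fubini together with \eqref{propT}; bookkeeping the dominating functions in $L^2_{loc}$ across the two cases (a) and (b) is the technically most delicate part of the argument.
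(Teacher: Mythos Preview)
Your proposal is correct and follows essentially the same route as the paper: both reduce to Theorem \ref{t:varcons} by verifying \ref{F}, \ref{G} via Lemma \ref{l:eqSolSCPD} together with \eqref{propT}, and then establish the $\gamma$-radonifying condition componentwise through the inequality \eqref{gammanorm}, i.e.\ the type~2 embedding \eqref{emb} on the $\pi_1$-slice and $\gamma$-Fubini on the $\pi_2$-slice. Your explicit remark that \textsc{umd}$^-$ passes to $\Ep{p}$, and your observation that for $p<2$ one must apply \eqref{emb} only at the $E$-level rather than the $\Ep{p}$-level, make overt two points the paper leaves implicit but uses in exactly the same way.
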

\begin{proof}
Let $Y:[0,\infty)\times \Omega \rightarrow \mathcal{E}_p$ be a strongly measurable, adapted process satisfying \eqref{intY}.
We apply Theorem \ref{t:varcons} to obtain the above assertion.  Thus we need to check conditions \ref{F} and \ref{G} with $F$ and $G$ defined by \eqref{FG}, and that the processes given by \eqref{G1} in that theorem is an element of $\gamma(L^2(0,t;H),\Ep{p})$ a.s.\ for all $t>0$. Let $t>0$ be fixed.\par
First let us notice that  if $\Phi\in\Ca{L}(L^p(-1,0;E),E)$ or \ref{h2} holds, $E$ is a Hilbert space, then by Lemma \ref{l:eqSolSCPD}.(iii) or Lemma \ref{l:eqSolSCPD}.(ii), respectively,  conditions  \ref{F} i \ref{G} from Theorem \ref{t:varcons} hold.
If $H$ is a finite dimensional space, then $\gamma(H,E)$ is isomorphic with $\Ca{L}(H,E)$. Hence by Lemma \ref{l:eqSolSCPD}.(i) it follows that conditions  \ref{F} i \ref{G} from Theorem \ref{t:varcons} are satisfied.   Since $Y\in\Bo{SL}_{\Ca{F}}^2(0,t;\Ep{p})$ for all $t>0$, in particular $Y$ has, almost surely, trajectories square integrable.
It is enough to check condition \eqref{G1} in Theorem \ref{t:varcons}. In the proof we use the following inequality:  
\begin{align}\label{gammanorm} \norm{\Ca{G}}_{\gamma(L^2(0,t;H),\Ep{p})}\leq C\lee\norm{\pi_1\Ca{G}}_{L^2(0,t;\gamma(H,E))}+\norm{\pi_2\Ca{G}}_{L^p(-1,0; L^2(0,t;\gamma(H,E)))}\p,
\end{align}
which holds by embedding \eqref{emb} and by $\gamma$-Fubini isomorphism between $\gamma(L^2(0,t,H),L^p(-1,0;E))$ and $L^p(-1,0; L^2(0,t;\gamma(H,E)))$ (cf. Proposition 2.6 in \cite{vanNeervenVeraarWeis}).\par
By Lemma \ref{l:eqSolSCPD} we obtain
\begin{align}\notag
\lee\E\ \norm{ u\mapsto \pi_1 \Ca{T}(t-u)G(Y(u)) }^2_{L^2(0,t;\gamma(H,E))}\p^\inv{2}&\leq\lee \E\ \int_0^t \tilde{b}^2(t-u)(1+\norm{ Y(u)}_{\Ep{p}})^2du \p^\inv{2} \\ \label{pi1a}
&\leq\norm{ \tilde{b}}_{L^2(0,t)}(1+\norm{ Y}_{\Bo{SL}_{\Ca{F}}^2(0,t;\Ep{p})})<\infty.
\end{align}
Next, by  \eqref{propT} and by Lemma \ref{l:eqSolSCPD} we see that for almost all $\omega$ 
\begin{align}\label{pi2a}
&\norm{(\theta,u)\mapsto(\pi_2 \Ca{T}(t-u)G(Y(u)))(\theta)}_{L^p(-1,0;L^2(0,t;\gamma(H,E)))}\\ \notag
&\hspace{3cm}=\lee\int_{-1}^{0} \lee\int_0^{t+\theta}\norm{ \pi_1 \Ca{T}(t-u+\theta) G(Y(u))}_{\gamma(H,E)}^2du\p^{\frac{p}{2}}d\theta\p^\inv{p}\\ \notag
&\hspace{3cm}\leq \lee\int_{-1}^0\lee\int_{0}^{t+\theta} \tilde{b}^2(t-u+\theta)\lee 1+\norm{Y(u)}_{\Ep{p}}\p^2du\p^{\frac{p}{2}}d\theta\p^\inv{p}\\
&\hspace{3cm}\leq \lee\int_0^t\lee\int_{-1}^0g^{p\maxsym 2}(t-u+\theta)d\theta\p^{\frac{2}{p\maxsym 2}}\lee 1+\norm{ Y(u)}_{\Ep{p}}\p^2du\p^\inv{2}, \notag 
\end{align}
where $g(u)=1_{\{u\geq 0\}}\tilde{b}(u), u\in\R{}$. In the last inequality in \eqref{pi2a} if $p\leq 2$  we apply Jensen's inequality for integral with respect to $\theta$ and then Fubini's theorem, and if $p> 2$ we use Minkowski's integral inequality.  
Moreover, notice that for a.e. $u\in[0,t]$  we obtain
$$\lee\int_{-1}^0g^{p\maxsym 2}(t-u+\theta)d\theta\p^{\frac{2}{p\maxsym 2}}\leq\norm{\tilde{b}}^2_{L^{p\maxsym 2}(0,t)}.$$
We conclude from the above inequality and  \eqref{pi2a} that 
\begin{align}\label{pi2a_2}&\norm{ (\theta,u)\mapsto(\pi_2 \Ca{T}(t-u)G(Y(u)))(\theta) }_{L^p(-1,0; L^2(0,t;\gamma(H,E)))}\\ &\hspace{5cm}\notag\leq \norm{\tilde{b}}_{L^{p\maxsym 2}(0,t)}\lee\sqrt{t}+\norm{ Y}_{L^2(0,t;\Ep{p})}\p
< \infty \quad \textrm{a.s.,} 
\end{align}
where in the last inequality we use $\Bo{SL}_{\Ca{F}}^2(0,t;\Ep{p})\subset L^2((0,t)\times\Omega;\Ep{p})
$. Consequently, from \eqref{gammanorm} it follows that $u\mapsto \Ca{T}(t-u)G(Y(u))$ belongs to $\gamma(L^2(0,t,H);\Ep{p})$ a.s.

Having checked conditions \ref{F}, \ref{G} and that process $\Ca{T}(t-u)G(Y(u))$ is in $\gamma(L^2(0,t;H),E)$ a.s.\ we may apply Theorem \ref{t:varcons} to obtain the desired result.
\end{proof}
In the sequel we need Lemma \ref{Lemma 4.1} (cf. Lemma 4.1 in \cite{coxgorajski}). The proof of the following lemma is left to the reader. 
\begin{lemma}\label{Lemma 4.1}
Let $t > 0$, $p\in[1,\infty)$ and $g\in L^p(-1,t;E)$. For all $s\in[0,t]$ let us denote $y(s)=g_s$ the segments of function $g$. Then the following hold: 
\begin{enumerate}
    \item the functions $y:[0,t]\mapsto L^p(-1,0;E)$, $[0,t]\ni s \mapsto \int_0^s y(r) dr \in W^{1,p}([-1,0];E)$ are continuous, and \begin{align} \label{sup}\sup_{s\in[0,t]}\norm{y(s)}_{L^p(-1,0;E)}&\leq \norm{g}_{L^p(-1,t;E)},\\
\frac{d}{d\theta}\int_0^ty(s)ds&=y(t)-y(0),    
    \end{align}
for a.e. $\theta \in [-1,0]$.
	 \item for every $h \in L^{1}(0,t)$ the mapping $yh:[0,t]\to L^{p}(-1,0;E)$ is Bochner integrable and
\begin{equation}\label{r:B2}
    \lee\int_0^ty(s)h(s)ds\p(\theta)=\int_0^tg(s+\theta)h(s)ds,
\end{equation} for a.e. $\theta \in[-1,0]$.  
\end{enumerate}
\end{lemma}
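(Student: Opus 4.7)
The strategy is to establish the sup bound and the continuity of $y$ first, then derive (ii), and finally use (ii) with a special choice of $h$ to obtain the $W^{1,p}$-statement in (i). The sup bound is immediate: for $s\in[0,t]$,
\[
\|y(s)\|_{L^p(-1,0;E)}^p=\int_{-1}^0\|g(s+\theta)\|_E^p\,d\theta=\int_{s-1}^s\|g(u)\|_E^p\,du\leq \|g\|_{L^p(-1,t;E)}^p,
\]
since $[s-1,s]\subset[-1,t]$. Continuity of $y:[0,t]\to L^p(-1,0;E)$ is the classical continuity of translation in $L^p$: for $g\in C([-1,t];E)$ it follows from uniform continuity, and for general $g\in L^p(-1,t;E)$ one approximates by continuous functions and uses the sup bound.

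For (ii), strong measurability of $yh$ follows from the continuity of $y$ and the measurability of $h$, while
\[
\int_0^t\|y(s)h(s)\|_{L^p(-1,0;E)}\,ds\leq \|g\|_{L^p(-1,t;E)}\|h\|_{L^1(0,t)}<\infty,
\]
so $yh$ is Bochner integrable with $I:=\int_0^t y(s)h(s)\,ds\in L^p(-1,0;E)$. To identify $I$ pointwise, I would pair it with functionals of the form $\xi\otimes x^*$, where $\xi\in L^{p'}(-1,0)$ and $x^*\in E^*$. Commuting integrals by Fubini (justified by absolute convergence) yields
\[
\int_{-1}^0\xi(\theta)\,\bigl\langle I(\theta),x^*\bigr\rangle\,d\theta=\int_{-1}^0\xi(\theta)\,\Bigl\langle\int_0^t g(s+\theta)h(s)\,ds,\ x^*\Bigr\rangle\,d\theta.
\]
Since $\xi$ and $x^*$ vary over separating families, the two $E$-valued integrable functions of $\theta$ must coincide almost everywhere, giving the claimed identity.

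To complete (i), I would specialise (ii) to $h=\mathbf{1}_{[0,s]}$, obtaining
\[
\Bigl(\int_0^s y(r)\,dr\Bigr)(\theta)=\int_0^s g(r+\theta)\,dr=\int_\theta^{s+\theta}g(u)\,du\quad\text{for a.e. }\theta.
\]
As a function of $\theta$ this is absolutely continuous with distributional derivative $g(s+\theta)-g(\theta)=(y(s)-y(0))(\theta)$ by the Lebesgue differentiation theorem; since $y(s)-y(0)\in L^p(-1,0;E)$, it follows that $\int_0^s y(r)\,dr\in W^{1,p}(-1,0;E)$, and specialising to $s=t$ gives the stated derivative formula. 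Continuity of $s\mapsto \int_0^s y(r)\,dr$ in $W^{1,p}(-1,0;E)$ then reduces to continuity in $L^p$ (automatic for the Bochner integral of a continuous bounded integrand) together with continuity of $s\mapsto y(s)-y(0)$, which is part of the first item. The main subtlety is the pointwise identification in (ii): one must be careful that the scalar testing by $\xi\otimes x^*$ really determines $I$ as an element of $L^p(-1,0;E)$, but this is precisely what the separating family produces via a Hahn--Banach/Pettis-type argument, and it avoids any Radon--Nikodym assumption on $E^*$.
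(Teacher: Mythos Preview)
Your argument is correct and complete. Note that the paper does not actually prove this lemma: it refers the reader to Lemma~4.1 in \cite{coxgorajski} and explicitly states that ``the proof of the following lemma is left to the reader,'' so there is no in-paper proof to compare against. Your route---sup bound and translation continuity first, then Bochner integrability and the pointwise identification in (ii) via testing against $\xi\otimes x^*$ with $\xi\in L^{p'}(-1,0)$, $x^*\in E^*$, and finally specialising $h=\mathbf{1}_{[0,s]}$ to read off the $W^{1,p}$ claim through the vector-valued Lebesgue differentiation theorem---is exactly the natural one and requires no assumptions on $E^*$ beyond the fact that $L^{p'}(-1,0)\otimes E^*$ is norming for $L^p(-1,0;E)$.
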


From Lemma \ref{Lemma 4.1} we obtain the following:
\begin{lemma}\label{l:A2}
Let $E_1$ be a Banach space and let  $[-1,t]\ni s \mapsto T(s)\in \Ca{L}(E_1,E)$ be a $E_1$-strongly measurable and uniformly bounded mapping, i.e. there exists $M>0$ such that for all $s\in [-1,t]$  
$$\n T(s)\n_{\Ca{L}(E_1,E)}\leq M.$$ For all $s\in[0,t]$ let us denote $T_s:[-1,0]\to \Ca{L}(E_1,E)$ the mapping given by $T_s(\theta):=T(s+\theta)$, $\theta \in[-1,0]$. 
Then for any strongly measurable $f:(0,t)\to E_1$ such that $\int_0^t\lee\int_{-1}^0 \len T(s+\theta)f(s)\pn^p d\theta\p^\inv{p}ds<\infty$ for some $p\geq 1$ the mapping $[0,t]\ni s \mapsto T_sf(s)\in L^p(-1,0;E)$, $T_sf(s)(\theta):=T(s+\theta)f(s)$, $\theta \in[-1,0]$ is Bochner integrable on $[0,t]$ and for a.e. $\theta\in[-1,0]$ 
\begin{align}\label{A4}
\lee\int_0^tT_sf(s)ds\p(\theta)=\int_0^tT(s+\theta)f(s)ds.
\end{align} 
\end{lemma}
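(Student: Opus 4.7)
The plan is to reduce the statement to Lemma \ref{Lemma 4.1}(ii) via approximation of $f$ by simple functions, and then derive the pointwise identity \eqref{A4} from the Fubini-type commutation of the Bochner integral in $L^p(-1,0;E)$ with evaluation at $\theta$.

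First I would treat a simple $f=\sum_{i=1}^n x_i\mathbf{1}_{A_i}$ with disjoint measurable $A_i\subset(0,t)$ and $x_i\in E_1$. Setting $g_i(s):=T(s)x_i$ for $s\in[-1,t]$ produces an $E$-valued strongly measurable function with $\|g_i(s)\|_E\le M\|x_i\|_{E_1}$, so $g_i\in L^p(-1,t;E)$. Applying Lemma \ref{Lemma 4.1}(ii) with this $g_i$ and $h=\mathbf{1}_{A_i}\in L^1(0,t)$ shows that $s\mapsto\mathbf{1}_{A_i}(s)(g_i)_s$ is Bochner integrable in $L^p(-1,0;E)$ and gives $\bigl(\int_0^t\mathbf{1}_{A_i}(s)(g_i)_s\,ds\bigr)(\theta)=\int_0^t T(s+\theta)x_i\mathbf{1}_{A_i}(s)\,ds$ for a.e.\ $\theta$. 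Summing over $i$ yields both \eqref{A4} and Bochner integrability of $s\mapsto T_sf(s)$ in the simple case.

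For a general strongly measurable $f$ satisfying the hypothesis I would pick simple $f_n$ with $f_n(s)\to f(s)$ in $E_1$ and $\|f_n(s)\|_{E_1}\le 2\|f(s)\|_{E_1}$ for every $s$. The uniform bound $\|T(s)\|\le M$ gives $\|T_sf_n(s)-T_sf(s)\|_{L^p(-1,0;E)}\le M\|f_n(s)-f(s)\|_{E_1}\to 0$ pointwise in $s$, so $s\mapsto T_sf(s)$ is strongly measurable as the a.e.\ limit of the measurable simple-case maps. Combined with $\int_0^t\|T_sf(s)\|_{L^p(-1,0;E)}\,ds<\infty$, this produces Bochner integrability of $s\mapsto T_sf(s)$ on $[0,t]$.

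To finish, I would invoke the standard fact that for any Bochner integrable $G:[0,t]\to L^p(-1,0;E)$ the map $(s,\theta)\mapsto G(s)(\theta)$ has a jointly measurable representative and $\bigl(\int_0^t G(s)\,ds\bigr)(\theta)=\int_0^t G(s)(\theta)\,ds$ for a.e.\ $\theta$; specialising $G(s)=T_sf(s)$ gives \eqref{A4}. A self-contained alternative is to pair both sides of \eqref{A4} with test functionals $\mathbf{1}_A\otimes e^*$, $A\subset[-1,0]$ measurable and $e^*\in E^*$, apply classical Fubini (whose integrability hypothesis is supplied by H\"older together with $\int_0^t\|T_sf(s)\|_{L^p(-1,0;E)}\,ds<\infty$), and upgrade to a vector identity via separability of the range of $s\mapsto T_sf(s)$. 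The only genuinely delicate point is this Fubini/evaluation exchange; everything else reduces mechanically to Lemma \ref{Lemma 4.1}.
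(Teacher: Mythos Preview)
Your proposal is correct and matches the paper's approach: the paper does not give a detailed proof of this lemma at all, stating only that it is obtained from Lemma~\ref{Lemma 4.1}. Your reduction to Lemma~\ref{Lemma 4.1}(ii) for simple $f$, followed by the approximation argument for strong measurability and the Fubini-type evaluation identity for the Bochner integral in $L^p(-1,0;E)$, is exactly the kind of argument the paper is implicitly deferring to the reader.
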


\begin{remark}\label{r:pi2YinLp}

Let $Y$ be a mild solution to \eqref{SDCP}. By Lemma \ref{l:A2} (applied to $E_1=\Ep{p}$, $T(u)=1_{u\geq 0}\pi_1\Ca{T}(u)$, $u\in[-1,t]$ and $f=F(Y)$) and by \eqref{propT}  it follows that for a.e. $\theta\in[-1,0]$ 
\begin{align}\label{r:pi2Bochint}
\lee\int_{0}^{t} \pi_2 \Ca{T}(t-s)F(Y(s)) ds\p(\theta) = \int_{0}^{t+\theta} \pi_1 \Ca{T}(t-s+\theta)F(Y(s))ds \textrm{
a.s.}
\end{align}
Let $p'$ be such that $\inv{p}+\inv{p'}=1$. By testing the stochastic convolution in equation \eqref{Dvarcon} against elements of $E^*\times L^{p'}(-1,0;E^*)$, which is norming for $\Ep{p}$ (cf. chapter III in~\cite{Diestel}), and applying equality \eqref{propT} one can shows that for a.e.   $\theta\in[-1,0]$ 
\begin{align} \label{r:pi2stochint}
\lee\int_{0}^{t} \pi_2 \Ca{T}(t-s)G(Y(s)) dW_H(s)\p(\theta) =\int_{0}^{t+\theta} \pi_1 \Ca{T}(t-s+\theta)G(Y(s))dW_H(s)  \ \textrm{a.s.}
\end{align}
Hence by Theorem \ref{t:Dvarcons} and by \eqref{propT}, \eqref{r:pi2Bochint} and \eqref{r:pi2stochint} it follows that if the assumptions of Theorem \ref{t:Dvarcons} holds and $Y$ is a weak solution to \eqref{SDCP}, then for all $t\geq 0$ 
\begin{align}\label{pi2Y}\pi_2 Y(t)(\theta) = \eqtwo{\pi_1 Y(t+\theta)}{t+\theta\geq 0}{f(t+\theta)}{t+\theta<0} \text{a.s.,}
\end{align} in particular it follows that $\pi_1 Y\in L_{loc}^p(0,\infty;E)$ a.s.

\end{remark}
\begin{proof}[The proof of \eqref{r:pi2stochint}] 
 Since $Y$ is strongly measurable there is no loss of generality in assuming that $E$ is separable. Then,   $L^{p}(-1,0;E)$ is also separable and there exists countable set $N^*\subset L^{p'}(-1,0;E^*)$ which is norming for  $L^{p}(-1,0;E)$ (cf. Lemma 1.1 in~\cite{vanNeervenISEM}). Fix $f^*\in N^*$, then by \eqref{propT} the stochastic  Fubini theorem in Hilbert space (cf. Theorem 3.5 in~\cite{vanNeervenVeraar_Fub}) we obtain
\begin{align*}
&\la \int_{0}^{t} \pi_2 \Ca{T}(t-s)G(Y(s))dW_H(s), f^* \ra_{L^p(-1,0;E),L^{p'}(-1,0;E^*)}\\&\quad= \int_{0}^{t} (\pi_2 \Ca{T}(t-s)G(Y(s)))^*f^* dW_H(s)\\
&\quad=  \int_{0}^{t} \int^0_{-1}1_{t-s+\theta\geq 0}G^*(Y(s))(\pi_1\Ca{T}(t-s+\theta)^*\vectwo{f^*(\theta)}{0}d\theta dW_H(s)\\&\quad=\int^0_{-1}\int_0^t 1_{t-s+\theta\geq 0}G^*(Y(s))(\pi_1\Ca{T}(t-s+\theta))^*\vectwo{f^*(\theta)}{0}dW_H(s)d\theta\\
&\quad=\int^0_{-1}\la\int_0^t 1_{t-s+\theta\geq 0}\pi_1\Ca{T}(t-s+\theta)G(Y(s))dW_H(s),f^*(\theta)\ra_{E,E^*} d\theta\\
&\quad=\la \theta\mapsto\int_0^{t+\theta} \pi_1\Ca{T}(t-s+\theta)G(Y(s))dW_H(s),f^*\ra_{L^p(-1,0;E),L^{p'}(-1,0;E^*)} \quad \textrm{a.s.}
\end{align*}
\end{proof}
 
 Let us introduce the following hypotheses:
\renewcommand\theenumi {\arabic{enumi}}
\renewcommand\theenumii {\alph{enumii}}
\renewcommand\labelenumi{(H\theenumi)}
\labelformat{enumi}{(H#1)}

\begin{enumerate} 
\setcounter{enumi}{4}
\item \label{h5} for all $t>0$ and all  $f\in L^1(0,t;\Ep{p})$ the function $\phi(f)$ is Bochner integrable on $[0,t]$.

\item \label{h6} for all $t>0$ and all $f\in L^2(0,t;\Ep{p})$ the operator $\psi(f)$ represents element from $\gamma(L^2(0,t;H),E)$.
\end{enumerate}   

Adding hypotheses  \ref{h5}, \ref{h6} to the set of assumptions of Theorem \ref{t:varcons2} we can establish the following result which generalised Theorem 4.2 from \cite{coxgorajski}: 
\renewcommand\theenumi {\roman{enumi}}
\renewcommand\labelenumi{(\theenumi) }
\begin{thm}\label{t:Dvarcons2}
Under the hypotheses of Theorem \ref{t:Dvarcons} and assuming additionally  \ref{h5}, \ref{h6} the following conditions are equivalent:  
\begin{enumerate}
    \item $Y$ is a generalised strong solution to \eqref{SDCP},
	 \item $Y$ is a weak solution to \eqref{SDCP},
	 \item $Y$ is a mild solution to \eqref{SDCP}.
\end{enumerate}
\end{thm}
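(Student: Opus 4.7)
My plan is to apply Theorem \ref{t:varcons2} in the ambient Banach space $\Ep{p}$ (which inherits \textsc{umd}$^-$ from $E$), with generator $\Ca{A}$, semigroup $(\Ca{T}(t))_{t \geq 0}$, and nonlinearities $F$, $G$ from \eqref{FG}. Most of the required verifications have already been carried out in the proof of Theorem \ref{t:Dvarcons}: conditions \ref{F} and \ref{G} were checked there via Lemma \ref{l:eqSolSCPD}, and the middle $\gamma$-radonifying assertion $u \mapsto \Ca{T}(t-u) G(Y(u)) \in \gamma(L^2(0,t;H),\Ep{p})$ a.s.\ was established through the estimates \eqref{pi1a}--\eqref{pi2a_2}. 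The upgrade from \ref{F} to \ref{F'} requires Bochner integrability of $F(g) = [\phi(g),0]'$ on $[0,t]$ for every $g \in L^1(0,t;\Ep{p})$, and this is exactly hypothesis \ref{h5}.

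For the first new $\gamma$-radonifying condition, concerning $u \mapsto G(Y(u))$, the splitting \eqref{gammanorm} reduces matters to the $\pi_1$-coordinate since $\pi_2 G(Y) = 0$. As $Y \in \Bo{SL}_{\Ca{F}}^2(0,t;\Ep{p}) \subset L^2((0,t) \times \Omega; \Ep{p})$, hypothesis \ref{h6} says precisely that $\psi(Y)$ represents an element of $\gamma(L^2(0,t;H),E)$ a.s., which closes this step.

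The main obstacle will be the third condition, on $\Psi(u) := \int_0^{t-u} \Ca{T}(s) G(Y(u))\, ds$. I would again use \eqref{gammanorm}. For the $\pi_1$-component, the ideal property of $\gamma$-radonifying operators together with Lemma \ref{l:eqSolSCPD} yields
$$\n\pi_1\Psi(u)\n_{\gamma(H,E)} \leq \lee\int_0^{t-u}\tilde{b}(s)\,ds\p(1+\n Y(u)\n_{\Ep{p}}),$$
which is square-integrable in $u \in [0,t]$ by \eqref{intY}. For the $\pi_2$-component, identity \eqref{propT} combined with Fubini rewrites
$$(\pi_2\Psi(u))(\theta) = \int_0^{(t-u+\theta)\maxsym 0} \pi_1\Ca{T}(r) G(Y(u))\, dr$$
for a.e.\ $\theta \in [-1,0]$. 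I would then bound the integrand via Lemma \ref{l:eqSolSCPD} and repeat the Jensen (for $p \le 2$) or Minkowski (for $p > 2$) dichotomy from \eqref{pi2a}--\eqref{pi2a_2} to control the $L^p(-1,0;L^2(0,t;\gamma(H,E)))$-norm by something of the form $\n\tilde{b}\n_{L^{p\maxsym 2}(0,t)}(\sqrt{t}+\n Y\n_{L^2(0,t;\Ep{p})})$, which is a.s.\ finite. With all three $\gamma$-radonifying conditions established, Theorem \ref{t:varcons2} immediately delivers the equivalence (i)$\Leftrightarrow$(ii)$\Leftrightarrow$(iii).
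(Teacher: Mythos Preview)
Your proposal is correct and follows essentially the same route as the paper: reduce to Theorem \ref{t:varcons2}, reuse the verifications from the proof of Theorem \ref{t:Dvarcons} for \ref{F}, \ref{G} and the middle $\gamma$-condition, invoke \ref{h5}--\ref{h6} for \ref{F'} and the first $\gamma$-condition, and then estimate $\Psi(u)=\int_0^{t-u}\Ca{T}(s)G(Y(u))\,ds$ coordinatewise via \eqref{gammanorm}, \eqref{propT} and Lemma \ref{l:eqSolSCPD}. The only minor difference is that for the $\pi_2$-component of $\Psi$ the paper does not repeat the Jensen/Minkowski dichotomy: since $(t-u+\theta)\maxsym 0\leq t$, one simply bounds $\int_0^{(t-u+\theta)\maxsym 0}\tilde{b}(r)\,dr\leq\int_0^t\tilde{b}(r)\,dr$ uniformly in $(u,\theta)$, which immediately gives the same bound $\int_0^t\tilde{b}(s)\,ds\,(\sqrt{t}+\n Y\n_{L^2(0,t;\Ep{p})})$ as for $\pi_1$ without any case distinction.
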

\begin{proof}
We apply Theorem \ref{t:varcons2} and use the proof of Theorem \ref{t:Dvarcons}. First notice that assumptions \ref{F'} and \ref{G} hold for $F=[\phi,0]'$ and $G=[\psi,0]'$, by \ref{h5}, \ref{h3}  and by \ref{h4}, respectively (see also Lemma \ref{l:eqSolSCPD}). Let $Y\in \Bo{SL}_{\Ca{F}}^2(0,t;\Ep{p})$ for some $t>0$.  By \ref{h6} the process $G(Y)=[\psi(Y),0]'$ is in $\gamma(L^2(0,t;H),E)$ a.s. .   
We shall now prove that $u\mapsto\int_{0}^{t-u} \Ca{T}(s)G(Y(u)) ds $ belongs to $\gamma(L^2(0,t;H),E)$ a.s. Remark \ref{r:varcons}.(iii) yields that for every $u<t$ and all $s\in[0,t]$ the integral  $\int_{0}^{t-u} \Ca{T}(s)G(Y(u)) ds $ takes values in $\gamma(H,\Ep{p})$. Set $M_{\Ca{T}}(t):=\sup_{u\in[0,t]}\n \mathcal{T}(u)\n_{\Ca{L}(\Ep{p})}$. From Lemma \ref{l:eqSolSCPD} we obtain
\begin{align*}
\norm{ u\mapsto \pi_1 \int_{0}^{t-u} \Ca{T}(s)G(Y(u))ds}&_{L^2(0,t;\gamma(H,E))}
\\& \leq\lee\int_0^t\lee\int_0^{t-u}\tilde{b}(s)(1+\norm{ Y(u)}_{\Ep{p}})ds\p^2du \p^\inv{2}\\
&\leq\int_0^t\tilde{b}(s)ds \lee\sqrt{t} + \norm{Y}_{L^2(0,t;\Ep{p})}\p<\infty \ \textrm{a.s.}
\end{align*}
Hence and using \eqref{propT} and Lemma \ref{Lemma 4.1} we conclude that  
\begin{align*}
&\norm{ (\theta,u)\mapsto \left(\pi_2\int_{0}^{t-u} \Ca{T}(s)G(Y(u))ds\right)(\theta)}_{L^p(-1,0; L^2(0,t;\gamma(H,E))}  \\
&\hspace{2cm} = \left( \int_{-1}^{0}\left(\int_0^t \norm{ \int_{0}^{t-u} 1_{\{s+\theta\geq 0\}}\pi_1\Ca{T}(s+\theta)G(Y(u))ds}_{\gamma(H,E)}^2du\right)^\frac{p}{2} d\theta \right)^{\inv{p}}\\
&\hspace{2cm} = \lee \int_{-1}^{0} \norm{ u\mapsto\int_{0}^{t-u+\theta} \pi_1\Ca{T}(s)G(Y(u))ds}_{L^2(0,t;\gamma(H,E))}^{p} d\theta \p^{\inv{p}}\\
&\hspace{2cm} \leq\int_0^t\tilde{b}(s)ds \lee\sqrt{t} + \norm{ Y}_{L^2(0,t;\Ep{p})}\p<\infty \quad \textrm{a.s.}
\end{align*}
\end{proof}
\subsubsection{Existence and uniqueness of mild solution to \eqref{SDCP}}
Existence and uniqueness of mild solution to \eqref{SDCP} in the case $\phi=0$ and under the Lipschitz-continuity assumption on $\psi$ is proved in \cite[Theorem 4.4]{coxgorajski}.  
\begin{thm}\label{t:existSDCP}
Let the assumptions of Theorem \ref{t:Dvarcons} hold. In addition, assume that $Y_0:=[x_0,f_0]'\in L^q((\Omega,\F_0),\Ep{p}))$ for some $q \in [2,\infty)$.
Then for every $t>0$  there exists a unique process $Y\in \Bo{SL}_{\Ca{F}}^q(0,t;\Ep{p})$ for which \eqref{Dvarcon}
holds. Moreover, there exists $L>0$ such that for all
$\Ca{X},\Ca{Y}\in L^{q}(\Omega;\Ep{p})$
\begin{align}\label{exist_voc2_1}
&\sup_{s\in[0,t]}\E \ \norm{Y(s;\Ca{X})}_{\Ep{p}}^{q}\leq L \Big(1+ \E\ \norm{\Ca{X}}_{\Ep{p}}^{q}\Big),\\
&\sup_{s\in[0,t]}\E \ \norm{Y(s;\Ca{X})-Y(s;\Ca{Y})}_{\Ep{p}}^{q}\leq L\ \E\ \norm{\Ca{X}-\Ca{Y}}_{\Ep{p}}^{q}, \label{exist_voc2_2}
\end{align}
 \end{thm}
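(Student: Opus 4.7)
The plan is to obtain $Y$ as the unique fixed point of the map
\[
(\Lambda Y)(t):=\Ca{T}(t)Y_0+\int_{0}^{t}\Ca{T}(t-s)F(Y(s))\,ds+\int_{0}^{t}\Ca{T}(t-s)G(Y(s))\,dW_H(s),
\]
acting on the Banach space $\Bo{SL}^{q}_{\Ca{F}}(0,T;\Ep{p})$ for a fixed $T>0$, equipped with the equivalent weighted norm $\|Y\|_{q,\lambda}:=\sup_{s\in[0,T]}e^{-\lambda s}\bigl(\E\|Y(s)\|_{\Ep{p}}^{q}\bigr)^{1/q}$, with $\lambda>0$ to be chosen large. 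A fixed point of $\Lambda$ is exactly a solution to \eqref{Dvarcon}, and the weighted-norm trick lets one treat any finite horizon $T$ in a single step, avoiding iteration on small subintervals.

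The first step is to verify that $\Lambda$ sends $\Bo{SL}^{q}_{\Ca{F}}(0,T;\Ep{p})$ into itself. The semigroup term is controlled by $M_{\Ca{T}}(T)\bigl(\E\|Y_{0}\|_{\Ep{p}}^{q}\bigr)^{1/q}$. For the Bochner convolution I would split into $\pi_1$ and $\pi_2$ components, bounding $\pi_{1}$ directly by Lemma \ref{l:eqSolSCPD} with H\"older and Minkowski, and rewriting, via \eqref{propT} as in Remark \ref{r:pi2YinLp},
\[
\Bigl(\pi_{2}\int_{0}^{t}\Ca{T}(t-s)F(Y(s))\,ds\Bigr)(\theta)=\int_{0}^{t+\theta}\pi_{1}\Ca{T}(t-s+\theta)F(Y(s))\,ds,
\]
then applying Lemma \ref{l:eqSolSCPD} pointwise in $\theta$ followed by Minkowski's integral inequality on $L^{p}(-1,0;E)$. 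For the stochastic convolution, BDG \eqref{BDG} reduces the $L^{q}(\Omega)$-bound to an estimate of the $\gamma(L^{2}(0,T;H),\Ep{p})$-norm of the integrand, which splits via \eqref{gammanorm} and the type~$2$ embedding \eqref{emb} into two pieces; these are controlled exactly by the calculations \eqref{pi1a} and \eqref{pi2a_2} already carried out in the proof of Theorem \ref{t:Dvarcons}, now raised to the $q$-th moment. The main technical obstacle is the $\pi_{2}$-component of the stochastic convolution: the $\gamma$-Fubini isomorphism must be used to recast the $\gamma$-norm over $L^{p}(-1,0;E)$ into an $L^{p}$-integral in $\theta$ of $L^{2}$-$\gamma$-norms with shifted time, and the splitting according to $t-u+\theta\geq 0$ has to be carried through carefully, but this is precisely the computation already streamlined in the proof of Theorem \ref{t:Dvarcons}.

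Running exactly the same estimates on $\Lambda Y-\Lambda Z$, but with the Lipschitz halves of Lemma \ref{l:eqSolSCPD} in place of the linear-growth halves, produces a convolution kernel $k\in L^{1}(0,T)$ built from $\tilde a$ and $\tilde b$ such that
\[
\bigl(\E\|\Lambda Y(t)-\Lambda Z(t)\|_{\Ep{p}}^{q}\bigr)^{1/q}\leq\int_{0}^{t}k(t-s)\bigl(\E\|Y(s)-Z(s)\|_{\Ep{p}}^{q}\bigr)^{1/q}ds.
\]
Multiplying by $e^{-\lambda t}$ and taking the supremum over $t\in[0,T]$ yields $\|\Lambda Y-\Lambda Z\|_{q,\lambda}\leq\bigl(\int_{0}^{T}e^{-\lambda s}k(s)\,ds\bigr)\|Y-Z\|_{q,\lambda}$, and the latter prefactor tends to $0$ as $\lambda\to\infty$ by dominated convergence. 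Choosing $\lambda$ large enough makes $\Lambda$ a strict contraction, giving a unique fixed point on $[0,T]$; since $T$ is arbitrary, this produces $Y$ on $[0,\infty)$. The bounds \eqref{exist_voc2_1} and \eqref{exist_voc2_2} then follow by applying the linear-growth and Lipschitz estimates to the identities $Y(\cdot;\Ca{X})=\Lambda Y(\cdot;\Ca{X})$ and $Y(\cdot;\Ca{X})-Y(\cdot;\Ca{Y})=\Lambda Y(\cdot;\Ca{X})-\Lambda Y(\cdot;\Ca{Y})$ respectively, and closing with a standard singular Gr\"onwall argument on $[0,T]$.
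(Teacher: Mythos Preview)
Your approach is essentially the paper's: the same weighted norm $\|\cdot\|_\beta$ on $\Bo{SL}^q_{\Ca{F}}(0,t;\Ep{p})$, the same splitting into $\pi_1$ and $\pi_2$ components, the same appeal to BDG together with \eqref{gammanorm}, \eqref{pi1a}, \eqref{pi2a_2} for the stochastic convolution, and the same Banach fixed-point conclusion once the prefactor is driven below $1$ for large $\lambda$.

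Two minor remarks. First, the Lipschitz estimate does not quite take the clean form $\int_0^t k(t-s)(\E\|Y(s)-Z(s)\|^q)^{1/q}\,ds$ with a single $L^1$ kernel: the stochastic part contributes terms like $\bigl(\int_0^t\tilde b^2(t-u)(\E\|\cdot\|^q)^{2/q}\,du\bigr)^{1/2}$ and $\|\tilde b\|_{L^{p\vee 2}(0,t)}\bigl(\int_0^t(\E\|\cdot\|^q)^{2/q}\,du\bigr)^{1/2}$, so the structure is a mixture of $L^1$- and $L^2$-type convolutions plus a non-convolution factor. This does not affect your conclusion, since each of the resulting prefactors still tends to $0$ as $\lambda\to\infty$ by dominated convergence, exactly as the paper computes explicitly via $C_{\beta,a}+C_{\beta,b}$. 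Second, for \eqref{exist_voc2_1}--\eqref{exist_voc2_2} the paper avoids Gr\"onwall entirely: having already chosen $\beta$ with $C_{E,q}K_\beta<\tfrac12$, it simply writes $\|Y(\cdot;\Ca X)-Y(\cdot;\Ca Y)\|_\beta\le M_{\Ca T,t}\|\Ca X-\Ca Y\|_{L^q}+C_{E,q}K_\beta\|Y(\cdot;\Ca X)-Y(\cdot;\Ca Y)\|_\beta$ and absorbs the last term into the left-hand side. Your Gr\"onwall route is correct but unnecessary here.
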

\begin{proof}
Let us fix $q\in[2,\infty)$ and $t>0$. In the Banach space $\Bo{SL}_{\Ca{F}}^{q}(0,t;\Ep{p})$ we introduce an equivalent norm
\begin{align}
\norm{Y}_{\beta}=\lee\sup_{s\in[0,t]}e^{-\beta s}\E\ \norm{Y(s)}_{\Ep{p}}^{q}\p^\inv{q}, \quad \beta \geq 0. 
\end{align}
 Let us define $\Ca{K}:\Bo{SL}_{\Ca{F}}^{q}(0,t;\Ep{p})\to \Bo{SL}_{\Ca{F}}^{q}(0,t;\Ep{p})$ as follows
$$\mathcal{K}(Z)(s)=\Ca{T}(s)\vectwo{x}{f}+\int_0^s\Ca{T}(s-u)F(Z(u))du+\int_0^s \Ca{T}(s-u)G(Z(u))dW_H(u)$$ for all $s\in [0,t]$.
From the proof of Theorem \ref{t:Dvarcons} and Lemma \ref{l:eqSolSCPD} it follows that $\Ca{K}$ is well defined.
To prove that $\Ca{K}(Z)$ is indeed in~$\Bo{SL}_{\Ca{F}}^{q}(0,t;\Ep{p})$, observe that by the Burkholder-Davis-Gundy inequality \eqref{BDG}, and then repeating the steps from the first part of the proof of Theorem \ref{t:Dvarcons} (cf. \eqref{gammanorm}-\eqref{pi2a_2}) we see that for every $s\in[0,t]$
\begin{align*}
&\lee \E\ \norm{\int_0^s \Ca{T}(s-u)G(Z(u))dW_H(u)}_{\Ep{p}}^q\p^{\inv{q}} \\ &\hspace{2.5cm}\lesssim_q \lee\E\ \norm{ u\mapsto\Ca{T}(s-u)G(Z(u))}^q_{\gamma(0,s;H,\Ep{p})}\p^\inv{q} \\
&\hspace{2.5cm}\lesssim_q^{\eqref{gammanorm}} \lee\E\ \norm{ u\mapsto\pi_1\Ca{T}(s-u)G(Z(u))}^{q}_{L^2(0,s;\gamma(H,E))}\p^\inv{q}\\
&\hspace{2.5cm}\quad \quad +\lee\E\ \norm{(\theta,u)\mapsto\pi_2\Ca{T}(s-u)G(Z(u))}^{q}_{L^p(-1,0;L^2(0,s;\gamma(H,E)))}\p^\inv{q}\\
&\hspace{2.5cm}\leq^{\eqref{pi1a}-\eqref{pi2a_2}} \lee\E\ \lee\int_0^s \tilde{b}^2(s-u)\lee 1+\norm{ Z(u)}_{\Ep{p}}\p^{2}du\p^\frac{q}{2}\p^\inv{q}\\&\hspace{2.5cm}\quad+\lee\E\ \lee\int_0^s\norm{\tilde{b}}_{L^{p\maxsym 2}(0,s)}^2(1+\norm{ Z(u)}_{\Ep{p}})^2du\p^\frac{q}{2}\p^{\inv{q}}\\
&\hspace{2.5cm}\leq \lee\int_0^s \tilde{b}^2(s-u)e^{2\beta u}e^{-2\beta u}\lee\E\ (1+\norm{ Z(u)}_{\Ep{p}})^{q}\p^\frac{2}{q}du\p^\inv{2}\\&\hspace{2.5cm}\quad+\norm{\tilde{b}}_{L^{p\maxsym 2}(0,s)}\lee\int_0^se^{2\beta u}e^{-2\beta u}\lee\E\ (1+\norm{ Z(u)}_{\Ep{p}})^q\p^\frac{2}{q}du\p^{\inv{2}}\\
&\hspace{2.5cm}\leq \lee\lee\int_0^s\tilde{b}^2(s-u)e^{2\beta u}du\p^\inv{2}+\norm{ \tilde{b}}_{L^{p\maxsym 2}(0,s)}\lee \int_0^s e^{2\beta u}du\p^\inv{2}\p\lee 1+\norm{ Z }_{\beta}\p,
\end{align*}
where in second to last inequality we use  the Minkowski's integral inequality.
Hence
\begin{align}\label{ist:stoch1}
&\sup_{s\in[0,t]}e^{-\beta s}\lee \E\ \norm{\int_0^s \Ca{T}(s-u)G(Z(u))dW_H(u)}^q\p^{\inv{q}}\\
&\leq\lee 1+\norm{ Z}_{\beta}\p\sup_{s\in[0,t]}\lee\lee\int_0^s\tilde{b}^2(s-u)e^{-2\beta(s- u)}du\p^\inv{2}+\norm{ \tilde{b}}_{L^{p\maxsym 2}(0,s)}\lee\int_0^se^{-2\beta(s- u)}du\p^\inv{2}\p\notag\\
&\quad\quad\quad\leq(1+\norm{ Z }_{\beta})C_{\beta,b},\notag
\end{align}
where $C_{\beta,b}=\lee\int_0^t\tilde{b}^2(u)e^{-2\beta u}du\p^\inv{2}+\norm{ \tilde{b}}_{L^{p\maxsym 2}(0,t)}\sqrt{\inv{2\beta}\lee 1-e^{-2\beta}\p}$.\par
Similarly, for all $Z_1, Z_2\in \Bo{SL}_{\Ca{F}}^{q}(0,t;\Ep{p})$ we obtain:
\begin{align}\label{ist:stoch2}
\sup_{s\in[0,t]}e^{-\beta s}\lee \E\ \norm{\int_0^s \Ca{T}(s-u)(G(Z_1(u))-G(Z_2(u)))dW_H(u)}^q\p^{\inv{q}}
\leq C_{\beta,b}\norm{ Z_1-Z_2 }_{\beta}.
\end{align}

By \eqref{propT}, \eqref{r:DFa} and the Minkowski's integral inequality for all $Z\in \Bo{SL}_{\Ca{F}}^{q}(0,t;\Ep{p})$ and for every $s\in[0,t]$ one gets   
\begin{align*}
e^{-s\beta}&\lee\E\ \norm{\int_0^s \Ca{T}(s-u)F(Z(u))du}^q\p^\inv{q}\leq \notag\\
&\leq e^{-s\beta}\int_0^s \lee \tilde{a}(s-u)e^{\beta u}+\len\tilde{a}\pn_{L^p(0,s)}e^{\beta u}\p e^{-\beta u}\lee\E\ \lee 1+\norm{ Z(u)}_{\Ep{p}}\p^q\p^\inv{q}du \notag\\
 \notag
&\leq\lee 1+\n Z \n_{\beta}\p\lee \int_0^s \tilde{a}(u)e^{-\beta u}du+\len\tilde{a}\pn_{L^p(0,s)}\inv{\beta}(1-e^{-\beta})\p.
\end{align*}
Hence
\begin{align}
\label{ist:bochner1}
\sup_{s\in[0,t]}e^{-s\beta}\lee\E\ \norm{\int_0^s \Ca{T}(s-u)F(Z(u))du}^q\p^\inv{q}\leq C_{\beta,a}\lee 1+\norm{ Z }_{\beta}\p, 
\end{align}
where $C_{\beta,a}=\int_0^t \tilde{a}(u)e^{-\beta u}du+\len\tilde{a}\pn_{L^p(0,s)}\inv{\beta}(1-e^{-\beta})$.
Between the same lines using \eqref{r:DF} for all $Z_1,Z_2\in \Bo{SL}_{\Ca{F}}^{q}(0,t;\Ep{p})$ one has
\begin{align}\label{ist:bochner2}
\sup_{s\in[0,t]}e^{-s\beta}\lee\E\ \norm{\int_0^s \Ca{T}(s-u)\lee F(Z_1(u))-F(Z_1(u))\p du}^q\p^\inv{q}\leq C_{\beta,a}\norm{ Z_1 -Z_2}_{\beta}. 
\end{align}
Finally, by \eqref{ist:stoch1}-\eqref{ist:stoch2} and by \eqref{ist:bochner1}-\eqref{ist:bochner2} we have
\begin{align}\label{K1}
&\norm{\mathcal{K}(Z_1)}_{\beta}\leq M_{\Ca{T},t}\norm{[x,f]^{'}}_{L^q(\Omega;\Ep{p})}+C_{E,q}K_\beta\lee 1+\norm{Z_1}_{\beta}\p\\
&\norm{\mathcal{K}(Z_1)-\mathcal{K}(Z_2)}_{\beta} \leq C_{E,q} K_{\beta} \norm{Z_1-Z_2}_{\beta}, \label{K2}
\end{align}
for all $Z_1,Z_2\in \Bo{SL}_{\Ca{F}}^{q}(0,t;\Ep{p})$, where $M_{\Ca{T},t}=\sup_{u\in[0,t]}\norm{\Ca{T}(u)}_{\Ca{L}(\Ep{p})}$, $$K_{\beta}=C_{\beta,a}+C_{\beta,b}$$ and $C_{E,q}$ is a constant depending on $q$ and the space $E$ and equal to a product of constant in   the Burkholder-Davis-Gundy inequality \eqref{BDG} and the constant from the type  2 property of $E$.   
For $\beta$ such that $K_{\beta}<\inv{C_{E,q}}$ the mapping $\mathcal{K}$ is a strict contraction. Then, by the Banach fix point theorem we have the existence and  of mild solution to \eqref{SDCP}.

To prove inequalities \eqref{exist_voc2_1}-\eqref{exist_voc2_2} we fix $\Ca{X},\Ca{Y}\in L^{q}(\Omega;\Ep{p})$. By \eqref{K2}  we obtain 
\begin{align*}
     \norm{Y(\cdot;\Ca{X})-Y(\cdot;\Ca{Y})}_{\beta}&\leq \n\Ca{T}(\cdot)(\Ca{X}-\Ca{Y})\n_\beta+\n\Ca{K}(Y(\cdot;\Ca{X}))-\Ca{K}(Y(\cdot;\Ca{Y}))\n_\beta\\&\leq M_{\Ca{T},t} \norm{\Ca{X}-\Ca{Y}}_{L^q(\Omega;\Ep{p})}+ C_{E,q}K_{\beta}\norm{Y(s;\Ca{X})-Y(s;\Ca{Y})}_{\beta}.
\end{align*} 
We take  $\beta>0$ such that $K_{\beta}<\inv{2C_{E,q}}$. Then from the above inequality we conclude that
\begin{align*}
\norm{Y(\cdot;\Ca{X})-Y(\cdot;\Ca{Y})}_{\beta}\leq 2M_{\Ca{T},t} \norm{\Ca{X}-\Ca{Y}}_{L^q(\Omega;\Ep{p})}.
\end{align*}
Similarly, by \eqref{K1} it follows that:
\begin{align*}
\norm{Y(;\Ca{X})}_\beta\leq 4M_{\Ca{T},t}\norm{\Ca{X}}_{L^q(\Omega;\Ep{p})}+1.
\end{align*}\par

\end{proof}

\subsection{Equivalence of solutions to \eqref{SDE} and \eqref{SDCP}}
Consider the problem \eqref{SDE} as given in the introduction with a fixed $p\in [1,\infty)$. 
\begin{defn}\label{d:slaberozw_SRzO}
A strongly measurable adapted process $X:[-1,\infty)\times\Omega \rightarrow E$ \footnote{For all $t\in[-1,0]$ we assume that $X(t)$  is a $\Ca{F}_0$-strongly mesurable.} is called a \emph{weak solution} to \eqref{SDE} if $X$ belongs to $L_{loc}^p(0,\infty;E)$ a.s. and for
all $t> 0$ and $x^*\in D(B^\odot)$:
 \begin{enumerate}
\item $s\mapsto\langle \phi(X(s),X_s), x^*\rangle$ is integrable on $[0,t]$ a.s.; 
 \item $(s,\omega)\mapsto\psi^*(X(s),X_s)x^*$ is stochastically integrable on $[0,t]$;
\item  almost surely 
\begin{align}\notag 
\langle X(t), x^*\rangle -\langle x_0,x^* \rangle&=\langle \Phi\int_{0}^{t} X_sds, x^*\rangle +\int_0^t\langle X(s), B^\odot x^*\rangle ds \\  
&+\int_0^t\langle \phi(X(s),X_s), x^*\rangle ds
+\int_0^t\psi^*(X(s),X_s)x^*dW_H(s)
\label{weaksolSDE}
\end{align}
\item $X_0=f_0$.
\end{enumerate}
\end{defn}
We say that $X$ is a  generalised strong solution to \eqref{SDE} if $X$ is a weak solution to \eqref{SDE} such that for all $t>0$  \begin{align}\label{gstrongsolSDE_1} \int_0^tX(s)ds&\in D(B) \text{ a.s., and} \\ X(t) - x_0 &= \Phi\int_{0}^{t} X_sds +B\int_0^t X(s)ds +\int_0^t \phi(X(s),X_s) ds
+\int_0^t\psi(X(s),X_s)dW_H(s) \text{ a.s.}.
\label{gstrongsolSDE_2}
\end{align}
\begin{thm} \label{t:rep}
Under hypotheses of Theorem \ref{t:Dvarcons} the following conditions hold:
\begin{enumerate}
\item Let $X$ be a weak solution to \eqref{SDE}, then the process $Y$ defined by $Y(t):=[X(t),X_t]'$ is a weak solution to \eqref{SDCP}. \label{repr1}
\item On the other hand, if $Y$ is a weak solution to \eqref{SDCP} then the process defined by $X|_{[-1,0)}=f_0$, $X(t):=\pi_1Y(t)$ for $t\geq 0$ is a weak solution to \eqref{SDE}. \label{repr2}
\end{enumerate}

\end{thm}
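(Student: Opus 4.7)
The plan is to pass between \eqref{SDCP} and \eqref{SDE} through the deterministic weak identity for the delay semigroup: for every $\Ca{X}=[x,f]'\in \Ep{p}$, every $x^*\in D(B^\odot)$, and every $t\geq 0$,
\[
\langle \pi_1\Ca{T}(t)\Ca{X}, x^*\rangle = \langle x,x^*\rangle + \int_0^t\langle \pi_1\Ca{T}(s)\Ca{X}, B^\odot x^*\rangle\,ds + \int_0^t\langle \Phi\,\pi_2\Ca{T}(s)\Ca{X}, x^*\rangle\,ds, \qquad (\ast)
\]
which on the dense subspace $D(\Ca{A})$ is just $\tfrac{d}{ds}\Ca{T}(s)\Ca{X}=\Ca{T}(s)\Ca{A}\Ca{X}$ written in the block form \eqref{opA} and paired with $[x^*,0]'$; it extends to all of $\Ep{p}$ by strong continuity of $\Ca{T}$ and the bound \eqref{r:h82} on $\Phi\pi_2\Ca{T}(\cdot)\Ca{X}$. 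Since Theorem \ref{t:Dvarcons} identifies weak solutions of \eqref{SDCP} with solutions of \eqref{Dvarcon}, it is enough to shuttle between Definition \ref{d:slaberozw_SRzO} and the variation-of-constants formula \eqref{Dvarcon}.

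For part (ii), let $Y$ satisfy \eqref{Dvarcon}; set $X(t):=\pi_1 Y(t)$ and invoke Remark \ref{r:pi2YinLp} to get $\pi_2 Y(t)=X_t$ a.s. Applying $\pi_1$ to \eqref{Dvarcon} and pairing with $x^*\in D(B^\odot)$ gives
\[
\langle X(t),x^*\rangle = \langle \pi_1\Ca{T}(t)Y(0),x^*\rangle + \int_0^t \langle \pi_1\Ca{T}(t-s)F(Y(s)),x^*\rangle\,ds + \int_0^t G^*(Y(s))(\pi_1\Ca{T}(t-s))^*x^*\,dW_H(s).
\]
I would then substitute $(\ast)$ with $\Ca{X}=Y(0)$, with $\Ca{X}=F(Y(s))$ inside the Bochner integral, and pathwise (via the $\gamma$-radonifying representation) with $\Ca{X}=G(Y(s))h$ inside the stochastic integral. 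Deterministic Fubini together with the stochastic Fubini theorem from \cite{vanNeervenVeraar_Fub} (applicable thanks to the $\gamma$-integrability bounds established in the proof of Theorem \ref{t:Dvarcons}) regroup the $B^\odot$-contributions into $\int_0^t\langle X(s),B^\odot x^*\rangle\,ds$, the $\Phi$-contributions into $\langle\Phi\int_0^t X_s\,ds,x^*\rangle$ (via $\pi_2 Y(s)=X_s$), and the remaining constant-in-$s$ pieces into $\langle x_0,x^*\rangle + \int_0^t\langle\phi(Y(s)),x^*\rangle\,ds + \int_0^t\psi^*(Y(s))x^*\,dW_H(s)$. This is precisely \eqref{weaksolSDE}; the required integrability and stochastic integrability conditions follow from \ref{h3}--\ref{h4} via Remark \ref{r:varcons}.

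For part (i), let $X$ be a weak solution of \eqref{SDE} and define $Y(t):=[X(t),X_t]'$. Theorem \ref{t:existSDCP} yields a unique $\widetilde Y\in\Bo{SL}_{\Ca{F}}^2(0,t;\Ep{p})$ satisfying \eqref{Dvarcon} with $\widetilde Y(0)=[x_0,f_0]'$, and by part (ii) the process $\widetilde X(t):=\pi_1\widetilde Y(t)$ is a weak solution to \eqref{SDE} with the same initial data. A Gronwall argument applying Theorem \ref{t:varcons} to the $C_0$-semigroup $(S(t))_{t\geq 0}$ generated by $B$ (with the delay and nonlinear terms absorbed into the forcing and estimated via \eqref{r:h82} together with the Lipschitz bounds \ref{h3}--\ref{h4}) gives pathwise uniqueness of weak solutions to \eqref{SDE}. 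Hence $X=\widetilde X$ a.s., so $Y=\widetilde Y$ satisfies \eqref{Dvarcon}, and by Theorem \ref{t:Dvarcons} it is a weak solution to \eqref{SDCP}.

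The step I expect to be the main obstacle is the stochastic Fubini manipulation used to interchange the $ds$-integration from $(\ast)$ with the $dW_H(u)$-integration inside the stochastic convolution in \eqref{Dvarcon}, and to identify cleanly the resulting $\int_0^t\int_u^t\langle\Phi\pi_2\Ca{T}(s-u)G(Y(u))h,x^*\rangle\,ds\,dW_H(u)$ with a $\Phi X_\cdot$-type expression. This hinges on the uniform $\gamma$-radonifying control of the kernel $(s,u)\mapsto\Phi\pi_2\Ca{T}(s-u)G(Y(u))$ that is already encoded in the $\gamma$-Fubini inequality \eqref{gammanorm} and Lemma \ref{l:eqSolSCPD} from the proof of Theorem \ref{t:Dvarcons}, and is precisely where the type 2 / \textsc{umd}$^-$ framework pays off.
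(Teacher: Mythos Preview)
Your route is genuinely different from the paper's, and part (i) has a real gap.

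For part (ii) the paper does \emph{not} push the mild formula \eqref{Dvarcon} through the scalar identity $(\ast)$ and stochastic Fubini. It uses Theorem~\ref{t:Dvarcons} and Remark~\ref{r:pi2YinLp} only to obtain $\pi_2 Y(t)=X_t$, and then returns to the \emph{weak} identity for \eqref{SDCP}: for $[y^*,g^*]'\in D(\Ca{A}^\odot)$ it expands $\la\int_0^t Y(s)\,ds,\Ca{A}^\odot[y^*,g^*]'\ra$ in the block form of $\Ca{A}^\odot$, using Lemma~\ref{Lemma 4.1} to compute $\frac{d}{d\theta}\int_0^t\pi_2 Y(s)\,ds=\pi_2 Y(t)-f$. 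The $g^*$-terms cancel and what remains is exactly \eqref{weaksolSDE}. Your approach via $(\ast)$ and stochastic Fubini can be made to work, but it is heavier and the Fubini step you flag is a genuine technical cost that the paper simply sidesteps.

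For part (i) your argument is not sound as written. You invoke Theorem~\ref{t:existSDCP} to produce $\widetilde Y$, but that theorem needs $[x_0,f_0]\in L^q(\Omega;\Ep{p})$ for some $q\geq 2$, whereas here only $[x_0,f_0]\in L^0$ is assumed. More seriously, the uniqueness step you need---``Gronwall argument applying Theorem~\ref{t:varcons} to $(S(t))_{t\geq0}$''---does not fit: Theorem~\ref{t:varcons} is formulated for autonomous $F,G$ depending on the present state, not on the segment $X_t$, and converting a weak solution of \eqref{SDE} into a mild solution for the $B$-semigroup with delay forcing is precisely part of what this theorem is meant to establish (via \eqref{SDEsol} in Corollary~\ref{c:existSDE}, which is downstream of Theorem~\ref{t:rep}). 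So the argument is circular or requires hypotheses you do not have. The paper instead proves (i) directly and without any moment assumption on $X$: Lemma~\ref{Lemma 4.1} gives $\int_0^t X_s\,ds\in W^{1,p}$ with $\frac{d}{d\theta}\int_0^t X_s\,ds=X_t-f_0$; one then writes out $\la\int_0^t Y(s)\,ds,\Ca{A}^\odot[y^*,g^*]'\ra$ in block form, substitutes \eqref{weaksolSDE} for the $B^\odot$/$\Phi$ piece, and reads off Definition~\ref{d:weaksolSCP}. No existence theorem, no Gronwall, no stochastic Fubini.
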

\begin{proof}
$(i)$.
Let us fix $t>0$. Since $X\in L_{loc}^p(0,\infty;E)$ a.s., by Lemma \ref{Lemma 4.1} it follows that the process $Y=[X,X_s]^{'}$ takes values in~$\Ep{p}$ and is strongly measurable, adapted and Bochner integrable on $[0,t]$ a.s. Moreover, $\int_0^tX_sds\in W^{1,p}([-1,0];E)$, $\int_0^tX_sds(0)=\int_0^tX(s)ds$ a.s. Thus  
\begin{align}\label{Y}
&\int_0^t Y(s)ds=\vectwo{\int_0^t X(s)ds}{\int_0^tX_sds} \in E\times W^{1,p}([-1,0];E) \text{ a.s.}
\end{align}
Hence and by  \eqref{weaksolSDE} and by Lemma \ref{Lemma 4.1} for all $[y^
*,g^*]^{'}\in D(\Ca{A}^\odot)$ the following equality holds for almost all $\omega$:
\begin{align}\notag
&\la \int_0^t Y(s)ds,\Ca{A}^\odot[y^*,g^*]'\ra\\&\notag\hspace{3cm}=\la \int_0^tX(s)ds, B^\odot y^*\ra+\la \Phi\int_0^t X_sds, y^*\ra +\la\frac{d}{d\theta}\int_0^tX_sds,g^*\ra\\&
\hspace{3cm}=\la X(t)-x,y^*\ra-\int_0^t\la\phi(X(s),X_s),y^*\ra ds-\int_0^t \psi^*(X(s),X_s)y^* dW_H(s) \notag\\ 
&\hspace{4cm}+\la X_t-f,g^*\ra. \label{Y2}
\end{align}
The sum of the first and last terms on right hand side of \eqref{Y2}  is equal to   
$\la Y(t)-[x,f]^{'},[y^*,g^*]^{'}\ra$. Adding the second and the third term on right hand side of \eqref{Y2}, we obtain $$-\int_0^t\la F(Y(s)), [y^*,g^*]^{'}\ra ds -\int_0^tG^*(Y(s))[y^*,g^*]^{'}dW_H(s).$$ Finally, by inserting the above sums into \eqref{Y2} we obtain, almost surely, 
\begin{align*}
\la \int_0^t Y(s)ds,\Ca{A}^\odot[y^*,g^*]'\ra&=\la Y(t)-[x,f]^{'},[y^*,g^*]^{'}\ra\\&-\int_0^t\la F(Y(s)), [y^*,g^*]'\ra ds-\int_0^tG^*(Y(s))[y^*,g^*]'dW_H(s).
\end{align*}

$(ii)$ By Theorem \ref{t:Dvarcons} the process $Y$ satisfies \eqref{voc2}. Remark \ref{r:pi2YinLp} leads to   
\begin{align} 
\label{r:rep0}
\pi_2 Y(t)(\theta) = \eqtwo{\pi_1 Y(t+\theta)}{t+\theta\geq 0}{f(t+\theta)}{ t+\theta< 0} \textrm{ a.s.}
\end{align}
for all $t>0$.
We conclude from Definition \ref{d:weaksolSCP} and then form Lemma \ref{Lemma 4.1} applied to $\pi_2Y$ that 
\begin{align}
\int_0^t\psi^*&(Y(s))y^*dW_H(s)=\int_0^tG^*(Y(s))[y^*,g^*]^{'}dW_H(s)\notag\\
&=
\la Y(t)-[x,f]^{'},[y^*,g^*]^{'}\ra-\la\int_0^t Y(s)ds,\Ca{A}^\odot[y^*,g^*]^{'}\ra-\int_0^t\la F(Y(s)),[y^*,g^*]^{'}\ra ds\notag\\
&=\la Y(t)-[x,f]^{'},[y^*,g^*]^{'}\ra-\la \pi_1\int_0^tY(s)ds,B^\odot y^*\ra-\la \Phi\int_0^t \pi_2Y(s)ds,y^*\ra\notag\\
&\quad -\la \frac{d}{d\theta}\int_0^t\pi_2Y(s)ds,g^*\ra -\int_0^t\la \psi(Y(s)),y^*\ra ds\notag\\
&=\la \pi_1Y(t)-x,y^*\ra-\la \int_0^t\pi_1Y(s)ds,B^\odot y^*\ra-\la \Phi\int_0^t \pi_2Y(s)ds,y^*\ra\notag
 \\&\quad\quad -\int_0^t\la\psi(Y(s)),y^*\ra ds \textrm{ a.s.}\label{r:rep1},
\end{align}
for every $[y^*,g^*]^{'}\in D(\Ca{A}^\odot)$, where in the last equality we use the following identity for a.e. $\theta \in[-1,0]$:  $\frac{d}{d\theta}\int_0^t\pi_2Y(s)ds=\pi_2Y(t)-f$   (cf. \eqref{r:rep0} i Lemma \ref{Lemma 4.1}).\par
Let us denote $$X(s)=\eqtwo{\pi_1Y(s)}{s\geq 0}{f(s)}{s\in(-1,0)}.$$  Since $Y$ has Bochner integrable trajectories a.s. and satisfies \eqref{r:rep0}, we have $ \int_{0}^{t}  |X (s)|^{p} ds < \infty$ a.s.  
By
 \eqref{r:rep0} and again by Lemma \ref{Lemma 4.1} we obtain
\begin{align}\label{r:rep11} \lee\int_0^t\pi_2Y(s)ds\p(\theta)=\lee\int_0^tX_sds\p(\theta) \quad \textrm{ dla p.w. }\theta \in[-1,0].
\end{align}
On substituting \eqref{r:rep11} into \eqref{r:rep1} we finally obtain, almost surely,
\begin{align*}
\int_0^t\psi^*(X(s),X_s)y^*dW_H(s)&=\la X(s)-a,y^*\ra-\la \int_0^tX(s)ds,B^\odot y^*\ra\\&\qquad-\la\Phi\int_0^t X_sds,y^*\ra -\int_0^t\la \psi(X(s),X_s),y^*\ra ds.
\end{align*}
\end{proof}
Theorems  \ref{t:Dvarcons2} and \ref{t:rep} combined give the following theorem which has been also established   in \cite[Theorem 4.8]{coxgorajski} under stronger set of assumptions. 
\begin{thm} \label{t:rep2}
Under hypotheses of Theorem \ref{t:Dvarcons2} the following conditions hold:
\begin{enumerate}
\item Let $X$ be a generalised strong solution to \eqref{SDE}, then the process $Y$ defined by $Y(t):=[X(t),X_t]'$ is a generalised strong solution to \eqref{SDCP}. 
\item On the other hand, if $Y$ is a generalised strong solution to \eqref{SDCP}, then the process defined by $X|_{[-1,0)}=f_0$, $X(t):=\pi_1(Y(t))$ for $t\geq 0$ is generalised strong solution to \eqref{SDE}. 
\end{enumerate}
\end{thm}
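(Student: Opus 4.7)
The strategy is to combine Theorems \ref{t:rep} and \ref{t:Dvarcons2}: the former provides equivalence of the weak concepts of solution across \eqref{SDE} and \eqref{SDCP}, while the latter provides equivalence of weak and generalised strong solutions for \eqref{SDCP}. Part (i) is then nearly immediate, since by definition a generalised strong solution $X$ to \eqref{SDE} is in particular a weak solution to \eqref{SDE}; Theorem \ref{t:rep}(\ref{repr1}) therefore yields that $Y=[X,X_\cdot]'$ is a weak solution to \eqref{SDCP}, and Theorem \ref{t:Dvarcons2} (weak $\Rightarrow$ generalised strong) promotes this to a generalised strong solution to \eqref{SDCP}.

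Part (ii) is the substantive direction. First, Theorem \ref{t:Dvarcons2} (generalised strong $\Rightarrow$ weak) combined with Theorem \ref{t:rep}(\ref{repr2}) gives that $X$, defined by $X|_{[-1,0)}=f_0$ and $X(t)=\pi_1Y(t)$ for $t\geq 0$, is a weak solution to \eqref{SDE}. It remains to verify the two extra conditions \eqref{gstrongsolSDE_1}--\eqref{gstrongsolSDE_2}. Applying $\pi_1\in\Ca{L}(\Ep{p},E)$ to the inclusion $\int_0^t Y(s)\,ds\in D(\Ca{A})=\{[x,f]'\in D(B)\times W^{1,p}(-1,0;E):f(0)=x\}$ yields $\int_0^t X(s)\,ds=\pi_1\int_0^t Y(s)\,ds\in D(B)$, which is \eqref{gstrongsolSDE_1}. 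Next, applying $\pi_1$ to the generalised strong identity for $Y$ and using the block form of $\Ca{A}$ on $D(\Ca{A})$, the definitions $F=[\phi,0]'$, $G=[\psi,0]'$, and the commutation of the bounded operator $\pi_1$ with the Bochner and stochastic integrals, I obtain
\begin{align*}
X(t)-x_0=B\int_0^tX(s)\,ds+\Phi\int_0^t\pi_2Y(s)\,ds+\int_0^t\phi(X(s),X_s)\,ds+\int_0^t\psi(X(s),X_s)\,dW_H(s).
\end{align*}
Finally, the identification $\int_0^t\pi_2Y(s)\,ds=\int_0^tX_s\,ds$ in $L^p(-1,0;E)$, which is exactly \eqref{r:rep11} and follows from \eqref{pi2Y} together with Lemma \ref{Lemma 4.1}, produces \eqref{gstrongsolSDE_2}.

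The main technical point is precisely this identification of $\pi_2Y$ with the segment process $X_\cdot$ and of $\int_0^t\pi_2Y(s)\,ds$ with $\int_0^t X_s\,ds$ as elements of $W^{1,p}(-1,0;E)$; however it is already available from the analysis in Section \ref{s:SDE} (Remark \ref{r:pi2YinLp} and Lemma \ref{Lemma 4.1}), so the present theorem amounts essentially to a bookkeeping exercise combining the equivalences proved in Theorems \ref{t:rep} and \ref{t:Dvarcons2}.
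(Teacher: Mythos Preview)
Your proposal is correct and follows essentially the same approach as the paper, which simply asserts (without giving an explicit proof) that Theorem~\ref{t:rep2} follows by combining Theorems~\ref{t:Dvarcons2} and~\ref{t:rep}. You have spelled out the details the paper omits, in particular the verification of \eqref{gstrongsolSDE_1}--\eqref{gstrongsolSDE_2} in part~(ii) via applying $\pi_1$ to the generalised strong identity for $Y$ and invoking the segment identification from Remark~\ref{r:pi2YinLp} and Lemma~\ref{Lemma 4.1}.
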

\begin{corol}\label{c:weak_eq_strongSDE}
Assume that the assumptions of Theorem \ref{t:Dvarcons2} hold. Then $X$ is a weak solution to \eqref{SDE} if and only if $X$ is a generalised strong solution to \eqref{SDE}.
\end{corol}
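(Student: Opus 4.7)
The plan is simply to chain the equivalences already established in the paper, going through the companion stochastic Cauchy problem in $\Ep{p}$. One direction of the corollary is tautological: by the very definition given in the paper, a generalised strong solution to \eqref{SDE} is required to be a weak solution to \eqref{SDE} (the identities \eqref{gstrongsolSDE_1}--\eqref{gstrongsolSDE_2} are added \emph{on top of} weak solvability). So only the implication ``weak $\Rightarrow$ generalised strong'' needs an argument.

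For that direction, I would start from a weak solution $X$ to \eqref{SDE} and form the $\Ep{p}$-valued segment process $Y(t):=[X(t),X_t]'$, $t\geq 0$. By Theorem \ref{t:rep}(i) this $Y$ is a weak solution to \eqref{SDCP}. Under the standing assumptions of Theorem \ref{t:Dvarcons2}, weak, mild, and generalised strong solutions to \eqref{SDCP} all coincide, so $Y$ is automatically a generalised strong solution to \eqref{SDCP}. Finally, I would apply Theorem \ref{t:rep2}(ii) to $Y$: the recovered process $\pi_1 Y$ on $[0,\infty)$ glued to $f_0$ on $[-1,0)$ coincides with $X$ by construction, and is a generalised strong solution to \eqref{SDE}. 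This closes the chain.

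The only step that requires any care, and the place I would expect a referee to push, is verifying that the process $Y=[X,X_\cdot]'$ satisfies the integrability requirement \eqref{intY} of Theorem \ref{t:Dvarcons2} (as well as the $L^2$-trajectory hypothesis used to pass through Theorem \ref{t:Dvarcons}). This is not a content question but a bookkeeping one: using Lemma \ref{Lemma 4.1}, one has the pointwise bound $\|X_s\|_{L^p(-1,0;E)}\leq \|X\|_{L^p(-1,s;E)}$, so appropriate integrability of $X$ on $[-1,t]$ transfers to appropriate integrability of $Y$ on $[0,t]$ in $\Ep{p}$. Under the convention that the hypotheses of Theorem \ref{t:Dvarcons2} are inherited in full (including the integrability assumption on the candidate solution), this verification is automatic, and the two-line chain above gives the corollary.
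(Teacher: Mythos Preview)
Your proposal is correct and is essentially the same argument the paper has in mind: the corollary is stated without proof, immediately after Theorems \ref{t:rep} and \ref{t:rep2}, as a direct consequence of chaining Theorem \ref{t:rep}(i), Theorem \ref{t:Dvarcons2}, and Theorem \ref{t:rep2}(ii), with the reverse implication being trivial by definition. Your remark about the integrability hypothesis \eqref{intY} is a fair point of bookkeeping but does not deviate from the paper's implicit approach.
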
 
\begin{corol}\label{c:existSDE}
Let the hypotheses of Theorem \ref{t:Dvarcons} hold and assume that $[x,f]\in L^{p\maxsym q}_{\Ca{F}_0}(\Omega;\Ep{p})$ for some $p\geq 1$ and $q\geq 2$. Then, for all $t>0$

\begin{enumerate}
\item in the space  $\Bo{SL}_{\Ca{F}}^{q\maxsym p}(0,t;E)$ there exists a unique weak solution $X=X(\cdot,[x,f]')$ to \eqref{SDE}. Moreover, $X$ satisfies the equation, almost surely,
\begin{align}\label{SDEsol}
X(t) = \pi_1\Ca{T}(t)[x,f]'+\int_{0}^{t} \pi_1\Ca{T}(t-s)[\phi(X(s)),0]'ds+ \int_{0}^{t} \pi_1\Ca{T}(t-s)[\psi(X(s)),0]'dW_H(s).  
\end{align}
\item  there exists $L>0$ such that
$\Ca{X},\Ca{Y}\in L^{q\maxsym p}(\Omega;\Ep{p})$
\begin{align}\label{SDEsol_ini}
&\sup_{s\in[0,t]}\E \ \norm{X(s;\Ca{X})}_{E}^{q\maxsym p}\leq L \big(1+ \E\ \norm{\Ca{Y}}^{q\maxsym p}_{L^{q\maxsym p}(\Omega;\Ep{p})}\big),\\
&\sup_{s\in[0,t]}\E \ \norm{X(s;\Ca{X})-X(s;\Ca{Y})}_{E}^{q\maxsym p}\leq L\ \E\ \norm{\Ca{X}-\Ca{Y}}_{L^{q\maxsym p}(\Omega;\Ep{p})}^{q\maxsym p};\label{r:SRzOwarpocz2}
\end{align} 
\end{enumerate}  
\end{corol}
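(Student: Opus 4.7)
The plan is to obtain Corollary \ref{c:existSDE} as a transfer of Theorem \ref{t:existSDCP} from the stochastic Cauchy problem \eqref{SDCP} to the delay equation \eqref{SDE}, using the equivalences established in Theorems \ref{t:Dvarcons} and \ref{t:rep}. First, I would apply Theorem \ref{t:existSDCP} with exponent $q \vee p \geq 2$ and initial datum $Y_0 = [x,f]' \in L^{q \vee p}((\Omega,\mathcal{F}_0);\mathcal{E}_p)$ to produce a unique process $Y \in \mathbb{SL}_{\mathcal{F}}^{q \vee p}(0,t;\mathcal{E}_p)$ satisfying the variation of constants formula \eqref{Dvarcon}, together with the Lipschitz-type estimates \eqref{exist_voc2_1}-\eqref{exist_voc2_2}. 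By Theorem \ref{t:Dvarcons}, this $Y$ is also a weak solution to \eqref{SDCP}, and hence by Theorem \ref{t:rep}(ii) the process $X$ defined by $X|_{[-1,0)} = f_0$ and $X(t) = \pi_1 Y(t)$ for $t \geq 0$ is a weak solution to \eqref{SDE}.

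To derive the formula \eqref{SDEsol}, I would apply the bounded linear projection $\pi_1$ to both sides of \eqref{Dvarcon}; since $\pi_1 \in \mathcal{L}(\mathcal{E}_p,E)$, it commutes with the Bochner integral and pulls through the stochastic integral, yielding the required identity. The bound $\|X(s)\|_E = \|\pi_1 Y(s)\|_E \leq \|Y(s)\|_{\mathcal{E}_p}$ immediately shows $X \in \mathbb{SL}_{\mathcal{F}}^{q\vee p}(0,t;E)$ and transfers the estimates \eqref{exist_voc2_1}-\eqref{exist_voc2_2} into \eqref{SDEsol_ini}-\eqref{r:SRzOwarpocz2}.

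For uniqueness, suppose $X' \in \mathbb{SL}_{\mathcal{F}}^{q \vee p}(0,t;E)$ is another weak solution to \eqref{SDE} with the same initial condition $[x,f]'$. By Theorem \ref{t:rep}(i), the process $Y' := [X',X'_\cdot]'$ is a weak solution to \eqref{SDCP}. Using Lemma \ref{Lemma 4.1}(i) together with the inequality $\sup_{s\in[0,t]} \|X'_s\|_{L^p(-1,0;E)}^{q \vee p} \leq \|X'\|_{L^p(-1,t;E)}^{q \vee p}$ and the integrability of $X'$, one checks that $Y' \in \mathbb{SL}_{\mathcal{F}}^{q \vee p}(0,t;\mathcal{E}_p)$. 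Then Theorem \ref{t:Dvarcons} gives that $Y'$ is a mild solution, and the uniqueness clause of Theorem \ref{t:existSDCP} forces $Y = Y'$, whence $X = \pi_1 Y = \pi_1 Y' = X'$ on $[0,t]$.

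The only point requiring care --- and the closest thing to a genuine obstacle --- is the verification that the segment component $X'_\cdot$ of a given weak solution to \eqref{SDE} has the right $\mathcal{E}_p$-valued integrability to sit inside $\mathbb{SL}_{\mathcal{F}}^{q \vee p}(0,t;\mathcal{E}_p)$; this is handled cleanly by the segment bound of Lemma \ref{Lemma 4.1}(i) applied pathwise together with the $L^{q \vee p}(\Omega)$-bound of the initial condition on $[-1,0]$. All remaining manipulations reduce to composing the three ingredients (existence of $Y$, the equivalence $\eqref{SDCP}^{\text{mild}} \Leftrightarrow \eqref{SDCP}^{\text{weak}}$, and the equivalence $\eqref{SDCP}^{\text{weak}} \Leftrightarrow \eqref{SDE}^{\text{weak}}$) and applying the norm comparison $\|\cdot\|_E \leq \|\cdot\|_{\mathcal{E}_p}$ to propagate the quantitative estimates.
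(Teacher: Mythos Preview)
Your proposal is correct and follows essentially the same route as the paper: combine Theorems \ref{t:existSDCP}, \ref{t:Dvarcons}, and \ref{t:rep}, with the only nontrivial verification being that for a weak solution $X\in\mathbb{SL}_{\mathcal{F}}^{q\vee p}(0,t;E)$ the lifted process $[X,X_\cdot]'$ lies in $\mathbb{SL}_{\mathcal{F}}^{q\vee p}(0,t;\mathcal{E}_p)$. The paper isolates exactly this segment-integrability step and handles it via the pathwise bound $\sup_{s\in[0,t]}\|X_s\|_{L^p(-1,0;E)}\leq\|f\|_{L^p(-1,0;E)}+\|X\|_{L^p(0,t;E)}$ followed by Minkowski's integral inequality to pass from $L^{q\vee p}(\Omega;L^p(0,t;E))$ to $L^p(0,t;L^{q\vee p}(\Omega;E))$, which is the concrete content behind your appeal to Lemma \ref{Lemma 4.1}(i).
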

\begin{proof}
Let us fix $t>0$. By Theorems \ref{t:rep}, \ref{t:Dvarcons} and \ref{t:existSDCP} it is enough to show that if $X\in \Bo{SL}^{p\maxsym q}_{\Ca{F}}(0,t;E)$, then the segment process $(X_s)_{s\geq 0}$ belongs to  $\Bo{SL}_{\Ca{F}}^{q\maxsym p}(0,t;L^p(-1,0;E))$. Indeed, let us notice that 
\begin{align*}
\sup_{s\in[0,t]}\n X_s\n_{L^p(-1,0;E)}\leq \n f\n_{L^p(-1,0;E)}+\n X\n_{L^p(0,t;E)} \textrm{ a.s.},
\end{align*}
Hence and by  Minkowski's inequality we obtain
\begin{align*}
\sup_{s\in[0,t]}\norm{ X_s}_{L^{q\maxsym p}(\Omega;L^p(-1,0;E))}&\leq \norm{ f}_{L^{q\maxsym p}(\Omega;L^p(-1,0;E))} +\norm{X}_{L^{q\maxsym p}(\Omega;L^p(0,t;E)}\\&\leq \norm{ f}_{L^{q\maxsym p}(\Omega;L^p(-1,0;E))} +\norm{X}_{L^p(0,t;L^{q\maxsym p}(\Omega;E))} 
\\&\leq \norm{ f}_{L^{q\maxsym p}(\Omega;L^p(-1,0;E))} +t\norm{X}_{\Bo{SL}^{p\maxsym q}_{\Ca{F}}(0,t;E)}<\infty. 
\end{align*}
\end{proof}
Therefore, and by Theorems \ref{t:rep} and \ref{t:Dvarcons} and by Theorem 4.5  in \cite{coxgorajski} we obtain the corollary:
\begin{corol}\label{c:continuitySDE}
Under hypotheses of Theorem \ref{t:Dvarcons} and assumption  that $[x,f]'\in L^{q\maxsym p}(\Omega;\Ep{p})$ for some $p\geq 1$ and some $q> 2$, if there exists $\alpha  \in(\inv{q\maxsym p},\inv{2})$ such that for all $t>0$
\begin{align}\label{z:reg_delay}
\int_0^t(a(s)s^{-\alpha}+b^{2\maxsym p}(s)s^{-(2\maxsym p)\alpha})ds< \infty,
\end{align}
where $a$ and $b$ are the functions form the hypotheses \ref{h3} and \ref{h4}, respectively, then the weak solution  $X(\cdot,[x,f]')\in \Bo{SL}_{\Ca{F}}^{q\maxsym p}(0,t;E)$ to \eqref{SDE} belongs to  $L^{q\maxsym p}(\Omega;C([0,t];E)$. Moreover, there exists $L>0$ such that for all
$\Ca{X},\Ca{Y}\in L^{q\maxsym p}(\Omega;\Ep{p})$
\begin{align}\label{reg_mild_1_delay}
&\E \ \sup_{s\in[0,t]}\norm{X(s;\Ca{X})}_{E}^{q\maxsym p}\leq L \big(1+ \E\ \norm{\Ca{X}}_{L^{q\maxsym p}(\Omega;\Ep{p})}^{q\maxsym p}\big),\\
&\E \ \sup_{s\in[0,t]}\norm{X(s;\Ca{X})-X(s;\Ca{Y})}_{E}^{q\maxsym p}\leq L\E\ \norm{\Ca{X}-\Ca{Y}}_{L^{q\maxsym p}(\Omega;\Ep{p})}^{q\maxsym p}. \label{reg_mild_2_delay}
\end{align}
\end{corol}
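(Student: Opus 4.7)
The plan is to derive continuity of $X$ from continuity of the associated mild solution $Y=[X,X_\cdot]'$ of \eqref{SDCP} on $\Ep{p}$, and then invoke the continuity result Theorem~4.5 of \cite{coxgorajski} for mild solutions of stochastic Cauchy problems on a Banach space under a Kolmogorov/factorization-type integrability condition on the coefficients. By Corollary~\ref{c:existSDE}, the weak solution $X\in\Bo{SL}^{q\maxsym p}_{\Ca{F}}(0,t;E)$ exists uniquely and is represented by \eqref{SDEsol}; by Theorem~\ref{t:rep} followed by Theorem~\ref{t:Dvarcons}, the lift $Y=[X,X_\cdot]'$ is a mild solution of \eqref{SDCP}, so satisfies \eqref{Dvarcon} in $\Ep{p}$ with drift $F=[\phi,0]'$ and diffusion $G=[\psi,0]'$.

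The first substantive step is to apply Theorem~4.5 of \cite{coxgorajski} to $Y$. That result, for a mild solution of an abstract stochastic Cauchy problem, produces an $L^{q\maxsym p}(\Omega;C([0,t];\Ep{p}))$ trajectory together with the two inequalities \eqref{reg_mild_1_delay}--\eqref{reg_mild_2_delay} on $\Ep{p}$, provided the effective growth/Lipschitz functions for $u\mapsto \Ca{T}(t-u)F$ and $u\mapsto \Ca{T}(t-u)G$ (in the $\Ep{p}$-norm, respectively $\gamma(H,\Ep{p})$-norm) satisfy a factorization condition of type
\begin{equation*}
\int_0^t \big(\tilde a(s)\,s^{-\alpha} + \tilde b(s)^{2\maxsym p}\,s^{-(2\maxsym p)\alpha}\big)\,ds<\infty,\qquad \alpha\in\bigl(\tfrac{1}{q\maxsym p},\tfrac12\bigr).
\end{equation*}
By Lemma~\ref{l:eqSolSCPD} applied to the first coordinate, and by the shift identity \eqref{propT} combined with the $\gamma$-Fubini reduction \eqref{gammanorm}--\eqref{pi2a_2} (already carried out in the proof of Theorem~\ref{t:Dvarcons} and Theorem~\ref{t:existSDCP}) to handle the second coordinate, the relevant effective functions are precisely the $\tilde a$ and $\tilde b$ produced in Lemma~\ref{l:eqSolSCPD}.

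The technical heart of the argument is therefore showing that \eqref{z:reg_delay} for $(a,b)$ implies the analogous condition for $(\tilde a,\tilde b)$. Recall from the proof of Lemma~\ref{l:eqSolSCPD} that $\tilde a(s) = a(s) + M_{\Ca{T}}(s)|\eta|(-1,0)\int_0^s a(u)\,du$ and similarly for $\tilde b$. Using the elementary convexity $(x+y)^r\leq 2^{r-1}(x^r+y^r)$ with $r=2\maxsym p$, the task reduces to estimating
\begin{equation*}
\int_0^t s^{-\alpha}\int_0^s a(u)\,du\,ds \quad\text{and}\quad \int_0^t s^{-(2\maxsym p)\alpha}\Big(\int_0^s b(u)\,du\Big)^{2\maxsym p}\,ds
\end{equation*}
in terms of the integrals appearing in \eqref{z:reg_delay}. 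The first is immediate via Fubini and $\alpha<1$. For the second, Hölder's inequality gives $\bigl(\int_0^s b(u)\,du\bigr)^{r}\leq s^{r-1}\int_0^s b^r(u)\,du$, and another application of Fubini, together with $r-1-r\alpha>-1$ (equivalently $\alpha<1$, which follows from $\alpha<1/2$), yields a bound by a constant times $\int_0^t b^r(u)\,du$, which in turn is controlled by $\int_0^t b^r(u)\,u^{-r\alpha}\,du$ since $u^{-r\alpha}\geq 1$ on any bounded interval containing $0$ (after a harmless normalization of $t$).

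Once Theorem~4.5 of \cite{coxgorajski} is applied, $Y\in L^{q\maxsym p}(\Omega;C([0,t];\Ep{p}))$ together with the corresponding $\Ep{p}$-valued versions of \eqref{reg_mild_1_delay}--\eqref{reg_mild_2_delay}. Since $\pi_1\in\Ca{L}(\Ep{p},E)$ has norm~$1$ and $X(s)=\pi_1 Y(s)$ for $s\in[0,t]$, continuity of $Y$ forces continuity of $X$, and the bounds for $\sup_{s\in[0,t]}\n Y(s)\n_{\Ep{p}}$ immediately dominate those for $\sup_{s\in[0,t]}\n X(s)\n_E$, giving \eqref{reg_mild_1_delay}--\eqref{reg_mild_2_delay}. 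The main obstacle is the verification that \eqref{z:reg_delay} transfers from $(a,b)$ to $(\tilde a,\tilde b)$; the remainder of the proof is a packaging step that chains together the equivalence and existence theorems already established, together with the known non-delay continuity result.
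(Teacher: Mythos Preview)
Your approach is essentially the same as the paper's: the paper justifies the corollary in a single line by invoking Theorems~\ref{t:rep}, \ref{t:Dvarcons}, and Theorem~4.5 of \cite{coxgorajski}, and you carry out precisely this chain (lift $X$ to the mild solution $Y$ of \eqref{SDCP}, apply the continuity result there, then project via $\pi_1$). The additional technical verification you supply---that the integrability condition \eqref{z:reg_delay} transfers from $(a,b)$ to the effective functions $(\tilde a,\tilde b)$ of Lemma~\ref{l:eqSolSCPD}---is correct and is left implicit in the paper.
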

 
\subsection{Examples}
\subsubsection{Stochastic reaction-diffusion equation with bounded delay}
Consider the following stochastic reaction-diffusion equation with bounded delay:
\begin{align}\label{ex}
\left\{\begin{array}{l} 
dy(t,\xi)=\Delta y(t,\xi)dt+\Big[\int_{t-1}^{t}\varphi(s-t,\xi)y(s,\xi)ds\\\quad+f_1(y(t,\xi))+ \int_{t-1}^{t}k_1(s-t,\xi)f_2(y(s,\xi))ds\Big]dt\\\quad+\left(g_1(y(t,\xi))+\int_{t-1}^{t}k_2(s-t,\xi)g_2(y(s,\xi))ds\right)d\mathcal{W}(t,\xi),
\\
y(t,0)=0,\quad y(t,1)=0,\\
y(0,\xi)=x_0(\xi),\ x_0\in L^r(0,1),\quad  y(\theta,\xi)=f_0(\theta,\xi), f_0\in L^p(-1,0;L^r(0,1)),  
\end{array}\right.
\end{align}  
where $\Delta=\frac{d^2}{d\xi^2}$,  $\varphi,k_1,k_2\in L^{p'}(-1,0;L^{\infty}(0,1))$ for some $p'\in(1,\infty]$ such that $\inv{p}+\inv{p'}=1$, and $f_1,f_2,g_1,g_2:\R\to\R$ are Lipschitz functions, and $\mathcal{W}(t,\xi)$ is a time-space Brownian motion on $[0,\infty)\times[0,1]$ and $r> 2$.
 This equation can be used to model phenomena in population dynamics (see \cite{baker}, \cite{Bocharov2000183} and \cite{kot2001elements}) or in neutral networks (see \cite{Lv20081590} and \cite{Bierkens2011}).  Let $B$ be a one-dimensional Laplacian on $E=L^r(0,1)$ with Dirichlet boundary conditions: 
\begin{align*}
B=\Delta_r=\frac{d^2}{d\xi^2}, \ D(\Delta_r)=\{x\in W^{2,r}([0,1]):x(0)=x(1)=0\}.
\end{align*}
Then, by Theorems 1.3.5 and 1.4.1 (see also (1.9.1)) in~\cite{daviesheat}  it follows that for all $q\in[1,\infty)$ the operator $(\Delta_q, D(\Delta_q))$ generates strongly continuous contraction semigroup  $(S_q(t))_{t\geq 0}$ on $L^q(0,1)$
\begin{align}\label{r:S}
(S_q(t)x)(s)=\int_0^1k(t,s,\xi)x(\xi)d\xi, 
\end{align}
for all $x\in L^q(0,1)$ and all $s\in[0,1]$, where
\begin{align}\label{r:j¹droCiep³a}
0<k(t,s,\xi)=\sum_{n=1}^\infty e^{\lambda_n t}e_n(s)e_n(\xi)\leq \inv{\sqrt{4\pi t}}e^{\frac{-(s-\xi)^2}{4t}},
\end{align}
and $e_n(s)=\sqrt{2}\sin(\pi n s)$ is an eigenvector of the Laplacian $B$  corresponding to the eigenvalue  $\lambda_n=-\pi^2n^2 $, $n\geq 1$. 
 Moreover, by the Aronson inequality (cf. \cite{PeszZab})  for all $x\in L^q(0,1)$ and $t>0$ the function $\xi \mapsto (S_q(t)x)(\xi)$ is continuous on $[0,1]$ and by 
\eqref{r:j¹droCiep³a} and then by  H\"older's inequality we obtain   
\begin{align}\label{r:ultrakontrakcja}
\sup_{s\in[0,1]}\left|(S_q(t)x)(s)\right|&\leq \sup_{s\in[0,1]}\int_0^1{\lee 4\pi t\p^{-\inv{2}}e^{\frac{-(s-\xi)^2}{4t}}|x(\xi)|}d\xi\\ \notag &\leq\sup_{s\in[0,1]}\lee\int_{\R{}} \lee 4\pi t\p^{-\frac{q'}{2}}e^{\frac{-q'(s-\xi)^2}{4t}}d\xi\p^\inv{q'}\norm{ x}_{L^q(0,1)}\\&\leq C_q t^{\inv{2q'}-\inv{2}}\n x\n_{L^q(0,1)},\notag 
\end{align}
where $C_q=(q')^{-\inv{q'}}(4\pi )^{\inv{2q'}-\inv{2}}$ and 
$\inv{q}+\inv{q'}=1$.
Let us introduce the notation:
\begin{align}
(\Phi h)(\xi) &=\int_{-1}^{0}\varphi(\theta,\xi)h(\theta,\xi)d\theta,\\
\phi([x,h]')(\xi)&=f_1(x(\xi))+ \int_{-1}^{0}k_1(\theta,\xi)f_2(h(\theta,\xi))d\theta,\\
(\psi([x,h]')u)(\xi)&=g_1(x(\xi))u(\xi)+\int_{-1}^{0}k_2(\theta,\xi)g_2(h(\theta,\xi))d\theta u(\xi),\\
W_H(t)u&=\int_0^1\int_0^tu(\xi)\mathcal{W}(dt,d\xi),
\end{align}
for all $[x,h]'\in\Ep{p}=L^r(0,1)\times L^p(-1,0;L^r(0,1))$, and all $u\in H=L^2(0,1)$.
Then, we can rewrite \eqref{ex} in the form \eqref{SDE} and by the following proposition the assumptions \ref{h1}-\ref{h4} hold: 
\begin{prop}\label{p:exRD}
Let $r> 2$ and  $p\geq 1$ be such that $2\maxsym p<\frac{4r}{2+r}$  and $E=L^r(0,1)$, $H=L^2(0,1)$. Then the following statements hold:
\begin{enumerate}
\item $\Phi\in\Ca{L}(L^p(-1,0;E),E)$ and $\Phi$ satisfies \ref{h2} with $$\eta:[-1,0]\to\Ca{L}(L^r(0,1)),\ \eta(\theta)x(\xi)=\lee\int_{-1}^\theta \varphi(s,\xi)ds\p x(\xi).$$
\item $\phi$ satisfies \ref{h3}. 
\item $\psi$ satisfies \ref{h4}.
\item $W_H$ is a $H$-cylindrical Wiener process. 
\end{enumerate}
\end{prop}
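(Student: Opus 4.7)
The plan is to check each of (i)--(iv); parts (i), (ii) and (iv) reduce to standard manipulations while (iii) is the only place where the arithmetic condition $p\maxsym 2<4r/(r+2)$ actually plays a role. For (i), since $\varphi\in L^{p'}(-1,0;L^\infty)\subset L^1(-1,0;L^\infty)$, I would define $\eta(\theta)x(\xi):=\lee\int_{-1}^{\theta}\varphi(s,\xi)ds\p x(\xi)$, verify $|\eta|(-1,0)\leq\int_{-1}^{0}\n\varphi(s,\cdot)\n_{L^\infty}ds<\infty$, and identify $\Phi f=\int_{-1}^{0}d\eta(\theta)f(\theta)$ for $f\in C([-1,0];E)$ by a Fubini argument, with boundedness $\Phi\in\Ca{L}(L^p(-1,0;E),E)$ following from H\"older in $\theta$. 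For (ii) I note that $\phi(\mathcal{X})\in L^r$ already: $\n f_1(x)\n_{L^r}\leq C(1+\n x\n_{L^r})$ by the Lipschitz property, while Minkowski's integral inequality in $\xi$ followed by H\"older in $\theta$ gives $\n\int_{-1}^{0}k_1(\theta,\cdot)f_2(h(\theta,\cdot))d\theta\n_{L^r}\leq C\n k_1\n_{L^{p'}(-1,0;L^\infty)}(1+\n h\n_{L^p(-1,0;L^r)})$. Since $(S(t))_{t\geq 0}$ is contractive on $L^r$, \ref{h3} then holds with $a$ constant, and the Lipschitz half is identical for $\phi(\mathcal{X})-\phi(\mathcal{Y})$. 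Part (iv) is the classical identification of the $L^2(0,1)$-cylindrical Wiener process with space-time white noise, through the covariance $\E[W_H(t)u\cdot W_H(s)v]=(t\minsym s)\la u,v\ra_{L^2}$.

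The core of the proof is (iii). I would write $\psi([x,h])u=m([x,h])\cdot u$ with
\[
m([x,h])(\xi):=g_1(x(\xi))+\int_{-1}^{0}k_2(\theta,\xi)g_2(h(\theta,\xi))d\theta,
\]
and observe by the same Minkowski--H\"older chain as in (ii) that $\n m(\mathcal{X})\n_{L^r}\leq C(1+\n\mathcal{X}\n_{\Ep{p}})$, with the corresponding Lipschitz estimate. To guarantee $\psi(\mathcal{X})\in\Ca{L}(H,E)$ I take $D(\psi):=L^\infty(0,1)\times L^p(-1,0;L^\infty(0,1))$, which is dense in $\Ep{p}$ since $(0,1)$ has finite measure. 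Using the integral representation of $S(t)$, the composition $S(t)\psi(\mathcal{X})$ is the integral operator from $L^2(0,1)$ to $L^r(0,1)$ with kernel $K_t(s,\xi):=k(t,s,\xi)m(\mathcal{X})(\xi)$. Since $L^r$ has type $2$ for $r>2$, by the square-function / Parseval characterization of the $\gamma$-norm for kernel operators one has
\[
\n S(t)\psi(\mathcal{X})\n_{\gamma(H,E)}^{r}\eqsim\int_{0}^{1}\lee\int_{0}^{1}k(t,s,\xi)^{2}m(\mathcal{X})(\xi)^{2}d\xi\p^{r/2}ds.
\]

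Applying H\"older inside the inner integral with conjugate exponents $(r/(r-2),r/2)$ separates an $L^r$-norm of $m(\mathcal{X})$ from $\lee\int_{0}^{1}k(t,s,\xi)^{2r/(r-2)}d\xi\p^{(r-2)/r}$; using the Gaussian upper bound $k(t,s,\xi)\leq(4\pi t)^{-1/2}e^{-(s-\xi)^{2}/(4t)}$ on the heat kernel, the latter is $\lesssim t^{-(r+2)/(2r)}$ uniformly in $s\in[0,1]$. Combining yields
\[
\n S(t)\psi(\mathcal{X})\n_{\gamma(H,E)}\leq Ct^{-(r+2)/(4r)}\n m(\mathcal{X})\n_{L^r}\leq b(t)(1+\n\mathcal{X}\n_{\Ep{p}})
\]
with $b(t):=Ct^{-(r+2)/(4r)}$, and the integrability $b\in L^{p\maxsym 2}_{loc}(0,\infty)$ is equivalent to $(p\maxsym 2)\cdot(r+2)/(4r)<1$, i.e.\ exactly to the hypothesis $p\maxsym 2<4r/(r+2)$. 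The Lipschitz bound is identical, with $m(\mathcal{X})$ replaced by $m(\mathcal{X})-m(\mathcal{Y})$. The main obstacle is precisely this $\gamma$-norm computation: choosing the right H\"older pair to extract the sharp Gaussian decay and verifying that the resulting exponent matches the given window on $p$ and $r$; the rest of the argument is routine once that single estimate is in hand.
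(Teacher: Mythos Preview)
Your proof is correct and follows the same architecture as the paper's: parts (i) and (ii) are handled by Minkowski's integral inequality plus H\"older exactly as you outline, and (iv) is a standard citation. For (iii) both arguments rest on the $\gamma$-Fubini square-function identification of $\n S(t)\psi(\Ca{X})\n_{\gamma(L^2,L^r)}$ together with the Gaussian upper bound on the Dirichlet heat kernel; the only substantive difference is where H\"older is applied. The paper introduces an auxiliary exponent $q\in\big(\tfrac{p\maxsym 2}{2},\tfrac{2r}{2+r}\big)$, uses the ultracontractivity estimate $\n S(t)\n_{L^q\to L^\infty}\lesssim t^{-1/(2q)}$, and then splits $\n m u\n_{L^q}\leq\n m\n_{L^{2q/(2-q)}}\n u\n_{L^2}$, arriving at $b(t)\sim t^{-1/(2q)}$. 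You instead apply H\"older directly to $\int k^2 m^2\,d\xi$ with the pair $(r/(r-2),r/2)$ and obtain the endpoint exponent $b(t)\sim t^{-(r+2)/(4r)}$ without the auxiliary parameter. Your route is slightly more direct and lands on the sharp singularity at once; the paper's route makes the origin of the open window on $q$ more visible. Both yield the same integrability condition $p\maxsym 2<4r/(r+2)$. One minor remark: the square-function identity you invoke is the $\gamma$-Fubini isomorphism, valid for every $L^r$ with $1\leq r<\infty$; it is not specifically a type-$2$ consequence, though this does not affect your argument.
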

\begin{proof}
By the Minkowski integral inequality and by the H\"older inequality we conclude that for all $h\in L^p(-1,0;L^r(0,1))$
\begin{align*}
\len \Phi h\pn_{L^r(0,1)}&\leq \int_{-1}^0\left(\int_0^1|\varphi(\theta,\xi)h(\theta,\xi)|^r d\xi\right)^\inv{r}d\theta\\ &\leq \operatorname{ess\ sup}_{\xi\in[0,1]} \len\varphi(\cdot,\xi)\pn_{L^{p'}(-1,0)}\len h\pn_{L^p(-1,0;L^r(0,1))}.
\end{align*}
Notice that $\phi$ is Lipschitz continuous. Indeed, we have 
\begin{align*}
&\len \phi([x_1,h_1])-\phi([x_2,h_2])\pn_{L^r(0,1)}\leq L_{f_1}\len x_1-x_2\pn_{L^r(0,1)}\\&\quad
\quad+\left(\int_0^1\left|\int_{-1}^0k_1(\theta,\xi)(f_2(h_1(\theta,\xi))-f_2(h_2(\theta,\xi)))d\theta\right|^r d\xi\right)^\inv{r}\\&\quad
\leq L_{f_1}\len x_1-x_2\pn_{L^r(0,1)}+L_{f_2}\operatorname{ess\ sup}_{\xi\in[0,1]} \len k_1(\cdot,\xi)\pn_{L^{p'}(-1,0)}\len h_1-h_2\pn_{L^p(-1,0;L^r(0,1))},
\end{align*}
for all $[x_1,h_1], [x_2,h_2]\in \Ep{p}$,
where $L_{f_1}, L_{f_2}$ are, respectively, the Lipschitz constant of $f_1$ and $f_2$. 

Now we prove (iii). Let $\psi=\psi_1+\psi_2$ and $\psi_1(x)u=g_1(x)u$ for all $x\in L^r(0,1)$ and $u\in L^2(0,1)$.  Notice that if $g_1, g_2$ are not  constant, then $\psi$ does not belong to $\Ca{L}(H,E)$. But,  using \eqref{r:ultrakontrakcja} with some $ \frac{p\maxsym 2}{2}< q<\frac{2r}{2+r}$ and then by H\"older's inequality  we see that for all $t>0$ and all $u\in L^2(0,1)$, $[x, h] \in \Ep{p}$
\begin{align}\label{ex_psi_1}
&\sup_{\xi\in[0,1]} |(S(t)\psi_1(x)u)(\xi)|\leq
C_{q}t^{\frac{q-1}{2q}-\inv{2}}\len \psi_1(x)u\pn_{L^q(0,1)}\\&\leq
C_{q}t^{\frac{q-1}{2q}-\inv{2}}\len g_1(x)\pn_{L^{\frac{2q}{2-q}}(0,1)}\len u\pn_{L^2(0,1)}\leq L_{g_1}C_{q}t^{\frac{q-1}{2q}-\inv{2}}(1+\len x\pn_{L^{r}(0,1)})\len u\pn_{L^2(0,1)},\notag
\end{align}
and
\begin{align}\label{ex_psi_2}
&\sup_{\xi\in[0,1]} \left|\left(S(t)\psi_2(h)u\right)(\xi)\right|\leq
C_{q}t^{\frac{q-1}{2q}-\inv{2}}\len \psi_2(h)u\pn_{L^q(0,1)}\\\notag&\leq
C_{q}t^{\frac{q-1}{2q}-\inv{2}}\left(\int_0^1\left|\int_{-1}^0k_2(\theta,\xi)g_2(h(\theta,\xi))d\theta\right|^{\frac{2q}{2-q}} d\xi\right)^{\frac{2-q}{2q}}\len u\pn_{L^2(0,1)}\\&\leq L_{g_2}\operatorname{ess\ sup}_{\xi\in[0,1]} \len k_2(\cdot,\xi)\pn_{L^{p'}(-1,0)}C_{q}t^{\frac{q-1}{2q}-\inv{2}}(1+\len h\pn_{L^p(-1,0;L^r(0,1))})\len u\pn_{L^2(0,1)},\notag
\end{align}
where $L_{g_1},L_{g_2}$ are, respectively, the Lipschitz constant of $g_1$ and $g_2$.
Now, by the $\gamma$-Fubini isomorphism (cf.  Proposition 2.6 in \cite{vanNeervenVeraarWeis}) we have $$J\lee\gamma\lee H,L^r(0,1)\p\p = L^r(0,1;H^*),$$
where the isomorphism  $J$ is given by $[u,(JR)(\xi)]_{H}=(Rh)(\xi)$ for every $R\in\gamma\lee H,L^r(0,1)\p$ for almost all $\xi\in[0,1]$ and all $u\in H$. Hence and by \eqref{ex_psi_1} and \eqref{ex_psi_2} for all $[x,h]\in \Ep{p}$ we have:
\begin{align}\label{exSphi_1}
\norm{S_r(t)\psi([x,h]')}_{\gamma(H,L^r(0,1))}\eqsim\left(\int_0^1\sup_{\n u\n_{H}\leq 1}\left|\left( S_r(t)\psi([x,h]')u\right)(\xi)\right|^rd\xi \right)^\inv{r}\leq b(t) (1+\len [x,h]\pn_{\Ep{p}}),
\end{align}
for all $t>0$, where $b(t)=2^{\inv{p'}}(L_{g_1}\maxsym L_{g_2}\operatorname{ess\ sup}_{\xi\in[0,1]} \len k_2(\cdot,\xi)\pn_{L^{p'}(-1,0)})C_{q}t^{\frac{q-1}{2q}-\inv{2}}$. Finally, observe that $b\in L^{p\maxsym 2}_{loc}(0,\infty)$.\par
The family of operators $\{W_H(t):t\geq 0\}$ is a $H$-cylindrical Wiener process by Theorem 3.2.4 in \cite{kallianpur1995stochastic}. 
\end{proof}
Therefore,  by Proposition \ref{p:exRD} and Corollary \ref{c:existSDE} if $p\geq1$ and $r>2$ are such that $p\maxsym 2<\frac{4r}{2+r}$, then there exists a unique weak solution to the problem \eqref{ex}. Moreover, if $p\maxsym 2<\frac{4r}{2+r\epsilon}$ for some $\epsilon\in(0,1)$, then by Corollary \ref{c:continuitySDE} the weak solution to the problem \eqref{ex} has a version in $L^p(\Omega;C([0,t];L^r(0,1)))$ for all $t>0$.

\subsubsection{Stochastic reaction-diffusion equation with unbounded delay}

In the Hilbert space $E=H=L^2(0,1)$ let us consider the following extended version of \eqref{ex}:
\begin{align}\label{ex2}
\left\{\begin{array}{l}
dy(t,\xi)=\Delta y(t,\xi)dt+\Big[\int_{t-1}^{t}\varphi(s-t,\xi)y(s,\xi)ds+\sum_{i=1}^n(\varphi_iy(t+\theta_i,\cdot))(\xi)\\ \quad+f_1(y(t,\xi))+ \int_{t-1}^{t}k_1(s-t,\xi)f_2(y(s,\xi))ds\Big]dt\\ \quad+\left(g_1(y(t,\xi))+\int_{t-1}^ {t}k_2(s-t,\xi)g_2(y(s,\xi))ds\right)d\mathcal{W}(t,\xi),
\\
y(t,0)=0, \quad y(t,1)=0,\\
y(0,\xi)=x_0(\xi), x_0\in L^r(0,1), \quad  y(\theta,\xi)=f_0(\theta,\xi), f_0\in L^p(-1,0;L^r(0,1)).
\end{array}\right.  
\end{align}  
where $\varphi_1,\ldots,\varphi_n\in\Ca{L}(E)$ and $-1=\theta_1<\theta_2<\ldots<\theta_n=0$.. Let $\Phi$ denote the delay operator:
$$(\Phi h)(\xi) =\int_{-1}^{0}\varphi(\theta,\xi)h(\theta,\xi)d\theta+\sum_{i=1}^n(\varphi_ih(\theta_i,\cdot))(\xi),$$
for all $h\in C([-1,0];E)$. Then, $\Phi$ satisfies \ref{h2} with $$\eta:[-1,0]\to\Ca{L}(L^2(0,1)),\ \eta(\theta)x(\xi)=\lee\int_{-1}^\theta \varphi(s,\xi)ds+\sum_{i:\theta_i\leq \theta}\varphi_i\p x(\xi),\ x\in E.$$ 
Form \cite[section 11.2.1]{DaPratoZabczykErgodicity} it follows that 
$\psi$ satisfies \ref{h4} with $$b(t)=\left(\sqrt{2}L_{g1}+\sqrt{2}L_{g_2}\operatorname{ess\ sup}_{\xi\in[0,1]} \len k_2(\cdot,\xi)\pn_{L^{p'}(-1,0)}\right)\len S(t) \pn_{\Ca{L}_2(E)}.$$ Notice that for all $t>0$ and all $p<4$ we have
\begin{align*}
\int_0^t \len S(s) \pn^{p\maxsym 2}_{\Ca{L}_2(E)}ds&= \int_0^t \left( \sum_{n=1}^\infty e^{-2s\pi^2n^2}\right)^{\frac{p\maxsym 2}{2}}ds\leq \left( \sum_{n=1}^\infty \left(\int_0^t e^{-(p\maxsym 2)s\pi^2n^2}ds\right)^\frac{2}{p\maxsym 2}\right)^\frac{{p\maxsym 2}}{2}\\&\leq
\inv{(p\maxsym 2)\pi^2}\left(\sum_{n=1}^\infty \inv{n^{\frac{4}{p\maxsym 2}}}\right)^\frac{p\maxsym 2}{2}<\infty.
\end{align*}
 Therefore,  by Corollary \ref{c:existSDE} if $p<4$, then there exists a unique weak solution to the problem \eqref{ex2}. Moreover, if $p<\frac{4}{1+\epsilon}$ for all $\epsilon>0$, then there exists $\alpha\in(0,\inv{p\maxsym 2}\frac{\epsilon}{\epsilon+1})$ such that for all $t>0$
\begin{align*}
 \int_0^t &\len S(s) \pn^{p\maxsym 2}_{\Ca{L}_2(E)}s^{-\alpha (p\maxsym 2)}ds\leq \lee\int_0^t \left( \sum_{n=1}^\infty e^{-2s\pi^2n^2}\right)^{(1+\epsilon)\frac{p\maxsym 2}{2}}\p^\inv{1+\epsilon}\lee\int_0^t s^{-\frac{\epsilon+1}{\epsilon}\alpha( p\maxsym 2)}ds\p^{\frac{\epsilon}{\epsilon+1}}\\&\leq
\lee\inv{(p\maxsym 2)\pi^2}\p^{\inv{1+\epsilon}}\left(\sum_{n=1}^\infty \inv{n^{\frac{4}{(1+\epsilon)(p\maxsym 2)}}}\right)^\frac{p\maxsym 2}{2}\lee\frac{t^{1-\frac{\epsilon+1}{\epsilon}\alpha(p\maxsym 2)}}{1-\frac{\epsilon+1}{\epsilon}\alpha(p\maxsym 2)}\p^{\frac{\epsilon}{1+\epsilon}}<\infty,  
\end{align*} 
by H\"older's inequality and Minkowski's integral inequality.
Finally, if $p<\frac{4}{1+\epsilon}$ for some $\epsilon>0$, then by Corollary \ref{c:continuitySDE} the weak solution to the problem \eqref{ex2} has a version in $L^p(\Omega;C([0,t];L^r(0,1)))$ for all $t>0$.

\section*{Acknowledgements}
The author wish to thank Anna Chojnowska-Michalik for helpful comments.

\bibliographystyle{plain}
\bibliography{literatura}

\end{document}